\documentclass[12pt,reqno]{amsart}
\usepackage{amsfonts,amsmath,amssymb}
\usepackage[all]{xy}
\newtheorem{thm}{Theorem}[section]
\newtheorem{proposition}[thm]{Proposition}
\newtheorem{corollary}[thm]{Corollary}
\newtheorem{lemma}[thm]{Lemma}
\newtheorem{definition}[thm]{Definition}
\newtheorem{example}[thm]{Example}
\newtheorem{remark}[thm]{Remark}

\usepackage{graphicx}
\usepackage{epstopdf}

\title {Euler characters for  general linear  Lie superalgebra}

\author{A.N. Sergeev}\address{Department of Mathematics, Saratov State University, Astrakhanskaya 83, Saratov 410012   Russian Federation.}
 \email{SergeevAN@info.sgu.ru}

\begin{document}

\maketitle

\begin{abstract}  M. Gorelik and Th. Heidersdors investigated Euler characters for Lie superalgebra  $\frak{gl}(m,n)$ and $\frak{osp}(m.2n)$. In the present paper we also  investigate  Euler characters for Lie superalgebra $\frak{gl}(m,n)$ but we use  a different approach and our  results  are formulated in different terms.
 \end{abstract}
 
\tableofcontents
\section{Introduction}  Let $K(\frak{gl}(m,n))$ be the ring of  the characters finite dimensional representations of the Lie superalgebra $\frak{gl}(m,n)$. There are two   natural bases in this ring:  the first consisting of the characters  of irreducible representations and the second consisting of the characters of  Euler modules.  The main goal of the  present paper  is an explicit formulae for the decomposition character of finite dimensional irreducible module in terms of  the characters of Euler modules. In the paper \cite{GH} authors investigated the decomposition  of irreducible modules in terms of Euler modules for all classical simple Lie superalgebras, but their technique and final formulae are different from our ones. 

Now let us formulate our  main result. Let $\mathcal P(m,n)$ be the set of the weight diagrams such that $m(f)=m, n(f)=n$ (see Definition \ref {weight}) and by $\mathcal E(m,n)$ we will denote the set of the weight diagrams $g$  such that  $m(g)\le m, n(g)\le n$ and $ n(g)-m(g)=n-m$. The irreducible characters  for  $\frak{gl}(m,n)$ can be enumerated by the elements of  $\mathcal P(m,n)$   and Euler    characters can be enumerated by the elements from  $\mathcal E(m,n)$.  Then we prove the following  formula
$$
ch\,L(g)=\sum_{p}(-1)^{\theta(f)+\theta(p)}e_{g,p} ch\,E(p)
$$

where $g\in \mathcal P(m,n), p\in \mathcal E(m,n)$
$$
 e_{g,p}=\sum_{q*Z=p}(-1)^{|Z|}d^{\,\infty}_{g,q},\quad Z\subset \Bbb Z_{\le n-m}
$$ 
and $d^{\infty}_{g,a}$ are given by simple explicit formula in Lemma \ref{infty}.
 
If  $g^{-1}(\times)\subset \Bbb Z_{>n-m}$ then the above formula has especially simple form 
$$
ch\,L(g)=\sum_{p}(-1)^{\theta(f)+\theta(p)}d_{g,p}^{\infty}ch\,E(p).
$$

 As a corollary of this decomposition  and   the stability of Euler characters under Duflo-Serganova homomorphism we  get the formula for image of the irreducible module under Duflo-Serganova homomorphism and formula for super-dimension. For the proofs of our results we need two main fact: decomposition  of ireduciible module  and Euler character in terms of Kac modules. For  reader convenience we  also give a new proof of Su-Zhang formula by means of creations operators.  
 \section{Preliminares}

Let us remind that the Lie superalgebra $\frak{gl}(m,n)$ is the Lie  superalgebra of the  linear transformations of a $\Bbb Z_2$ graded vector space $V=V_0\oplus V_1$ ($V$ is also called the standard representation of $\frak g$). We have 
$$
\frak g_{\bar0}=\frak{gl}(m)\oplus\frak{gl}(n),\quad\frak g_{\bar1}=V_0\otimes V_1^*\oplus V_1\otimes V_0^*.
$$ 
We also have $\Bbb Z$ graded decomposition 
$
\frak g=\frak g_{-1}\oplus \frak g_0\oplus \frak g_1
$
where
$$
\frak g_{-1}=V_1\otimes V^*_0,\,\, \frak g_{1}=V_0\otimes V^*_1\,.
$$
Let us fix  bases in $V_0=<e_1,\dots,e_m>$ and $V_1=<f_1,\dots,f_n>$ respectively. 
Let  $\frak{b}$ be the subalgebra of upper triangular matrix in $\frak{gl}(m,n)$ and $\frak{k}$ be  the    subalgebra of diagonal matrix in  $\frak{gl}(m,n)$ in the above basis. By   $\varepsilon_1,\dots,\varepsilon_m,\delta_1,\dots,\delta_n$  we will denote  the weights of standard representation with respect  to $\frak k$. The corresponding system of positive roots  $R^+=R^+_0\cup R^+_1$  of $\frak{gl}(m,n)$ can  be described in the following way
$$
R^+_{0}=\{\varepsilon_i-\varepsilon_j\,:\, 1\le i< j\le m\,:\, \delta_k-\delta_l,\, 1\le k<l\le n \}
$$
$$
R_1^+=\{\varepsilon_i-\delta_k,\, 1\le i\le m,\,1\le k\le n\}.
$$

 Let also  
$$
P=\{\chi=\lambda_1\varepsilon_1+\dots+\lambda_m\varepsilon_m+\mu_1\delta_1+\dots+\mu_n\delta_n,\mid n_i,m_j\in \Bbb Z\}
$$
be the weight lattice and 
$$
P^+=\{\chi\in P\mid \lambda_i-\lambda_j\ge0,\,i<j\,:\mu_k-\mu_l\ge0,\,k<l\}
$$
be the set of highest weights.

We will use the following parity on the weight lattice due to C. Gruson  and V. Serganova \cite{GS}  and Brundan and Stroppel \cite{BS} by saying  that $\varepsilon_i$ (resp. $\delta_j$) is even (resp. odd). It is easy to check that every finite dimensional module $L$ can be represented in the form
$$
L=L^+\oplus L^{-}
$$
where $L^+$ is the submodule of $L$ in which weight space has the same parity as the corresponding weight and $L^{-}$ is the submodule in which  the parities differ.  We should note that this construction is a particular case of Deligne construction category $Rep(G,z)$ from the paper \cite{D}  for $G=GL(m,n)$ and $z=diag(\underbrace{1,\dots,1}_{m},\underbrace{-1,\dots,-1}_{n}).$

Let us denote by $\mathcal F$ the category of finite dimensional modules over $\frak{gl}(n,m)$ such that every module in $\mathcal F$ is semisimple over Cartan  subalgebra $\frak k$ and and all  its weights are in $P$.
By  $K(\mathcal F )$ we will denote the quotient of the Grothendieck ring of   $\mathcal F$ by the relation $[L]-[\Pi(L)]=0$ where $\Pi(L)$ is the module  with the shifted parity $\Pi(L)_0=L_1, \Pi(L)_1=L_0$ and $x*v=(-1)^{p(x)}xv,x\in\mathfrak{gl}(m,n).$ For every $L\in \mathcal F$ we can define 
$$
ch\,L=\sum_{\chi}\dim L_{\chi}e^{\chi}
$$
where the sum is taken over all weights of $L$. It is easy to see that $ch\,L$ is well defined function on $K(\mathcal F)$.

The ring $K(\mathcal F)$ can be describe explicitly in the following way.  Let 
$$
P_{m,n}=\Bbb Z[x_1^{\pm1},\dots, x_m^{\pm1},\, y_1^{\pm1},\dots, y_n^{\pm1}]
$$
be the ring of Laurent polynomials  in variables $x_1,\dots,x_m$ and $y_1,\dots, y_n.$ 

If we set $x_i=e^{\varepsilon_i},\, y_j=e^{\delta_j}$ then we get a character map 
$$
ch : K(\mathcal F)\longrightarrow P_{m,n}.
$$
 Let   also 
$$
\Lambda^{\pm}_{m,n}=\{f\in P_{m,n}^{S_m\times S_n}\mid x_i\frac{\partial f}{\partial x_i}+y_j\frac{\partial f}{\partial y_j}\in(x_i+y_j) \}
$$
be the subring of $P_{m.n}$ of supersymmetric Laurent  polynomials. 
\begin{thm}\cite{SV} The   ring  $K(\mathcal F)$ is isomorphic to the ring $\Lambda^{\pm}_{m,n}$ under the character map.
\end{thm}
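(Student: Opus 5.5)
We need to prove three things about the character map $ch:K(\mathcal F)\to P_{m,n}$: it is well defined with image in $\Lambda^{\pm}_{m,n}$, it is injective, and it is surjective onto $\Lambda^{\pm}_{m,n}$.

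\emph{Image lies in $\Lambda^{\pm}_{m,n}$.} Weyl group invariance under $S_m\times S_n$ is immediate, because $\frak g_{\bar 0}=\frak{gl}(m)\oplus\frak{gl}(n)$ acts semisimply with weights in $P$. For the supersymmetry condition, fix $i,j$ and consider the odd root $\alpha=\varepsilon_i-\delta_j$. Let $\frak s\subset\frak g$ be the $\frak{gl}(1,1)$-subalgebra spanned by $e_\alpha$, $e_{-\alpha}$, $h_i=E_{ii}$ and $h_j'=E_{m+j,m+j}$. Restrict a module $L\in\mathcal F$ to $\frak s$ together with the centralizing Cartan part. Each weight string along $\alpha$ then splits into $\frak s$-modules. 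On the typical part, where the central element $h=h_i+h_j'$ acts by $\lambda_i+\mu_j\neq0$, the pieces are $2$-dimensional with weights $\chi,\chi-\alpha$. On the atypical part ($\lambda_i+\mu_j=0$) the pieces may be arbitrary, but all their weights satisfy $(x_i\partial_{x_i}+y_j\partial_{y_j})e^\chi=(\lambda_i+\mu_j)e^\chi=0$.

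The typical pieces contribute $e^{\chi}(1+x_i^{-1}y_j)$, which is divisible by $x_i+y_j$ up to a unit. So
\[
x_i\partial_{x_i}ch L+y_j\partial_{y_j}ch L=\sum(\lambda_i+\mu_j)\,(\text{typical part})
\]
lies in the ideal $(x_i+y_j)$. The atypical part contributes nothing to this derivative. Note that $ch L$ uses $\dim$, not $\operatorname{sdim}$, which is consistent with the parity convention of $L^\pm$ and the relation $[L]=[\Pi L]$; this compatibility should be checked. Additivity on exact sequences gives a ring homomorphism on $K(\mathcal F)$.

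\emph{Injectivity.} $K(\mathcal F)$ is spanned by classes of irreducibles $L(\lambda)$, $\lambda\in P^+$, since $\Pi$ is identified. The character of $L(\lambda)$ equals $e^\lambda$ plus lower terms in the dominance order, so the images of the irreducibles are linearly independent. Hence $ch$ is injective.

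\emph{Surjectivity (main step).} I would use Kac modules $K(\lambda)=\operatorname{Ind}_{\frak g_0\oplus\frak g_1}^{\frak g}V_\lambda$. Their characters are
\[
ch\,K(\lambda)=\prod_{i,j}(1+x_i^{-1}y_j)\,s_\lambda(x)s_{\mu}(y),
\]
with $s_\lambda(x)s_\mu(y)$ Laurent Schur functions. Let $\Delta=\prod_{i,j}(x_i+y_j)$. Then $\Delta\cdot(S_m\times S_n\text{-invariants})$ lies in the span of Kac characters, after twisting by a power of $\det$. Given $f\in\Lambda^{\pm}_{m,n}$, I would argue by induction on atypicality, following Sergeev--Veselov.

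First, the supersymmetry condition means exactly that $f$ restricted to $x_i=-y_j$ is independent of $x_i$. Restrict $f$ to the diagonal $x_m=-y_n=t$. The result $\bar f$ lies in $\Lambda^{\pm}_{m-1,n-1}$. By induction on $m+n$ and the fact that restriction sends $ch L$ to a character for $\frak{gl}(m-1,n-1)$ (this is the Duflo--Serganova type restriction), we lift $\bar f$ to some $ch M$ with the same restriction.

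Then $f-ch M$ vanishes on all hyperplanes $x_i=-y_j$, by symmetry and the independence property. It is therefore divisible by $\Delta$, so it is a combination of Kac characters, which are characters of modules. The difficult point is this lifting step: showing that restriction $\Lambda^{\pm}_{m,n}\to\Lambda^{\pm}_{m-1,n-1}$ composed with $ch$ is surjective onto the image. I would handle it via explicit generators, namely super power sums $\sum x_i^k-(-1)^k\sum y_j^k$ for $k\in\Bbb Z$ together with $\prod x_i/\prod y_j$ (the Berezinian). These are characters of virtual modules built from $V$, $V^*$ and the Berezinian. I would show that they generate $\Lambda^{\pm}_{m,n}$ modulo $\Delta\cdot(\text{invariants})$, again by induction.
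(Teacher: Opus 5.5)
Your first three steps are sound. The $\mathfrak{gl}(1,1)$ argument puts $ch\,L$ in $\Lambda^{\pm}_{m,n}$. For ordinary characters a typical two-dimensional piece contributes $e^{\chi}(1+y_j/x_i)$ whatever its parity, and $ch\,\Pi L=ch\,L$, so the ``compatibility'' you flag needs no check. Unitriangularity gives injectivity. Your observation that $\ker\varphi\cap\Lambda^{\pm}_{m,n}=\Delta\cdot P_{m,n}^{S_m\times S_n}$, which is exactly the $\mathbb{Z}$-span of the Kac characters, is also correct.

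However, this only reduces surjectivity to the claim that every element of $\Lambda^{\pm}_{m-1,n-1}$ is $\varphi$ of a virtual $\mathfrak{gl}(m,n)$-character. That claim is the whole content of the theorem, and it is not proved. The reason you first give, that restriction (Duflo--Serganova) sends characters to characters, yields $\varphi(ch\,K(\mathfrak{gl}(m,n)))\subseteq ch\,K(\mathfrak{gl}(m-1,n-1))$, which is the wrong inclusion. The paper itself gives no argument here; it quotes the theorem from \cite{SV}.

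The repair you propose fails as stated. Over $\mathbb{Z}$, the super power sums and $\mathrm{Ber}^{\pm1}$ do not generate $\Lambda^{\pm}_{m,n}$ modulo $\Delta\cdot P^{S_m\times S_n}_{m,n}$. Take $(m,n)=(5,1)$. Then $\varphi(p_k)=\sum_{i\le 4}x_i^k$ and $\varphi(\mathrm{Ber})=-x_1x_2x_3x_4$. In a product $(x_1x_2x_3x_4)^c\,p_{k_1}\cdots p_{k_s}$, the coefficient of $x_1x_2$ counts maps $\{1,\dots,s\}\to\{1,2,3,4\}$ with fibre sums $(1-c,1-c,-c,-c)$. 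Composing with the transposition $(12)$ if $c\neq 1$, or $(34)$ if $c=1$, is a fixed-point-free involution on these maps, so the coefficient is even. Hence $e_2(x_1,\dots,x_4)=\varphi(ch\,\Lambda^2V)$ does not lie in $\varphi(\mathbb{Z}[p_k,\mathrm{Ber}^{\pm1}])$, that is, $ch\,\Lambda^2V\notin\mathbb{Z}[p_k,\mathrm{Ber}^{\pm1}]+\Delta\cdot P^{S_5\times S_1}_{5,1}$.

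Even over $\mathbb{Q}$ your induction does not close. Generation modulo $\Delta$ at level $(m,n)$ needs generation of all of $\Lambda^{\pm}_{m-1,n-1}$, including its Kac ideal. So the inductive hypothesis must be strengthened, and you would still need a saturation step to return to $\mathbb{Z}$: unitriangularity of the $ch\,L(\lambda)$ shows that $Nf\in ch\,K(\mathcal F)$ implies $f\in ch\,K(\mathcal F)$.

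Euler characters make your scheme work as it stands. They are virtual characters by construction, being alternating sums of cohomology. By Corollary \ref{stab}, $\varphi$ sends $E_{\lambda,\mu}$ with $(r,s)\neq(m,n)$ to the $\mathfrak{gl}(m-1,n-1)$ Euler character with the same labels. Take the inductive hypothesis ``Euler characters span $\Lambda^{\pm}$'', which holds for $\min(m,n)=0$ by Weyl. Then your lifting step and your $\ker\varphi$ argument give spanning at level $(m,n)$. The real input you would have to supply is that stability property.
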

For any parabolic subalgebra $\frak{p}\subset \frak{gl}(m,n)$ and any finite dimensional representation of $M$ of $\frak p$ by a super version of Borel--Weil--Bott construction one can define the corresponding Euler character $ch\,E^{\frak p}(M)$. According to the general formula due to Serganova \cite{Serga}
$$
ch\,E^{\frak p}(M)=\sum_{w\in W_0}w\left(\frac{De^{\rho}ch\,M}{\prod_{\alpha\in R_{\frak{p}}\cap R_1^+}(1-e^{-\alpha})}\right)
$$
with
$$
D=\frac{\prod_{\alpha\in R_1^+}(e^{\alpha/2}+e^{-\alpha/2})}{\prod_{\alpha\in R_0^+}(e^{\alpha/2}-e^{-\alpha/2})}
$$
Here $\rho$ is the half-sum of even positive roots minus the half-sum of odd positive roots, $R_{\frak p}$ is the set of roots $\alpha$ such that $\frak{gl}(m,n)_{\pm\alpha}\subset\frak{p}$.  Let  $(r,s)$ be  a pair of integers  such that $0\le r \le m,\,$ $ 0\le s\le n,\,\,r-s=m-n$.   the following  Consider the system of simple roots
$$
\{\varepsilon_i-\varepsilon_{i+1},\delta_j-\delta_{j+1}, \varepsilon_r-\delta_1,\delta_s-\varepsilon_{r+1},
\varepsilon_m-\delta_{s+1}\},\,i\in[1,m]\setminus\{r\},\,j\in[1,n]\setminus\{s\}.
$$
So we have the corresponding set of positive even and odd roots. 
 Consider  now the parabolic subalgebra  $\frak p$ with
$$
R_{\frak p}=\{\varepsilon_i-\varepsilon_j,\,\delta_p-\delta_q,\, \pm (\varepsilon_i-\delta_p)\},
$$
where  $r+1\le i,j\le m,\,i\ne j$ and $s+1\le p,q\le n,\,p\ne q$.

If we set 
$$
\chi_{r,s}=\sum_{i=1}^r\lambda_i\varepsilon_i+\sum_{j=1}^s\mu_j\delta_j
$$
where 
$$
\lambda=(\lambda_1,\dots,\lambda_r),\,\,\mu=(\mu_1,\dots,\mu_s)
$$
are non increasing sequences of integers
then $\chi$ defines one dimensional representation of $\frak p$. Then it is easy to check that

$$
ch\,E(\chi_{r,s})\Delta(x)\Delta(y)=
$$

\begin{equation}
=\left\{\prod_{(i,j)\in D_{+}}\left(1+\frac{y_j}{x_i}\right)\prod_{(i,j)\in D_{-}}\left(1+\frac{x_i}{y_j}\right)x_1^{\lambda_1}\dots x_r^{\lambda_r}y_1^{\mu_1}\dots y_s^{\mu_s}x^{\rho_m}y^{\rho_n}\right\}
\end{equation}

where 
$$
\Delta(x)=\prod_{i<j}(x_i-x_j),\quad \Delta(y)=\prod_{p<q}(y_p-y_q)
$$
$$
D_+=\{(i,j)\mid 1\le i\le r,\, 1\le j\le n\},\,
$$
$$
 D_{-}=\{(i,j)\mid r+1\le i\le m,\, 1\le j\le s\}
$$
$
 x^{\rho_m}=x_1^{m-1}\dots x_m^{0},\,\, y^{\rho_n}=y_1^{n-1}\dots y_n^{0}.
$
and $r-s=m-n$.

  Let us denote by $P(m,n)$ the set of pairs of sequences of non-increasing  integers  $(\lambda,\mu)$ such that $\lambda=(\lambda_1,\dots,\lambda_r),\,\,\mu=(\mu_1,\dots,\mu_s)$
$$
 r\le m,\, s \le n,\,\,  r- s=m-n
$$
We will  also denote  $E(\chi_{r,s})$ by $E_{\lambda,\mu}$.
The following Theorem shows that Euler characters forms a basis in the ring $K(\frak{gl}(m,n))$.
\begin{thm} \label{basis}(\,\cite{Ser}) If   $(\,\lambda,\mu)\in P(m,n)$ then  the set of all $
ch\,E_{\lambda,\mu}\,
$
forms a basis in the ring  $\Lambda^{\pm}_{m,n}$.
\end{thm}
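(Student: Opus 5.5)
The plan is to use the Sergeev--Veselov theorem: under the character map $K(\mathcal F)$ is identified with $\Lambda^{\pm}_{m,n}$. It is classical that $\Lambda^{\pm}_{m,n}$ has a $\Bbb Z$-basis of characters of irreducible (or Kac) modules indexed by $P^+$. So it suffices to find a unitriangular transition matrix between the family $\{ch\,E_{\lambda,\mu}\}_{(\lambda,\mu)\in P(m,n)}$ and a known basis, say Kac characters. We first check that each $ch\,E_{\lambda,\mu}$ lies in $\Lambda^{\pm}_{m,n}$. This holds because it is an Euler characteristic of finite dimensional modules in $\mathcal F$, or one checks it directly from formula (1): after division by $\Delta(x)\Delta(y)$ the antisymmetrized expression is $S_m\times S_n$-invariant. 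The factors $(1+y_j/x_i)$ then give the supersymmetry condition on setting $x_i=-y_j$.

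Next we build a bijection between $P(m,n)$ and $P^+$. Given $(\lambda,\mu)$ with $\lambda$ of length $r$ and $\mu$ of length $s$, with $m-r=n-s=k$, attach the dominant weight obtained as follows. Pad $\lambda$ by the $k$ entries $\lambda_r-1,\dots$, or more naturally by values forced to be atypical. Pad $\mu$ correspondingly, so that the result is the highest weight $\chi$ of the leading term of (1). Concretely, the highest monomial of the bracket in (1) is $x^{\lambda}\prod_{(i,j)\in D_+}1\cdot\prod_{D_-}(x_i/y_j)\,x^{\rho_m}y^{\rho_n}$ or $x^\lambda y^\mu x^{\rho}y^{\rho}$, depending on the chosen order. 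Using Kac-module characters $ch\,K(\chi)\Delta(x)\Delta(y)=\mathrm{Alt}\bigl(\prod_{i,j}(1+y_j/x_i)x^{\chi+\rho}\bigr)$, we expand the product over $D_-$ and factor the missing $D_+$ part. This shows that $ch\,E_{\lambda,\mu}$ is an integral combination of Euler characters, and hence of Kac characters $ch\,K(\chi')$ after further expansion.

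The main step is to choose a total order on weights in which exactly one Kac character appears with coefficient $\pm1$ as the leading term, and in which the assignment $(\lambda,\mu)\mapsto$ leading weight is a bijection onto $P^+$. I would use the weight diagram and cap combinatorics: the $k$ pairs $(i,j)$ with $i>r$, $j>s$ are exactly the ``frozen'' atypical positions. The leading weight should be the one in which those $k$ positions carry $\times$-free, maximally atypical symbols placed at the leftmost admissible places. Injectivity and surjectivity then reduce to reading $\lambda,\mu$ off the diagram of $\chi$ by deleting the $k$ leftmost atypical pairs. Alternatively, we could compare generating functions degree by degree in a filtration by total degree and use a dimension count.

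I expect the main obstacle to be the triangularity claim: controlling cancellations in the antisymmetrization, where many terms of the expanded product can straighten to the same dominant weight with signs. I would handle this by a dominance order argument. Every monomial in the expansion is at most the leading one in the partial order given by $\chi\le\chi'$ iff $\chi'-\chi$ is a sum of positive roots relative to the distinguished Borel, and straightening preserves this bound. Since Kac characters form a basis and the transition matrix is triangular with $\pm1$ on the diagonal, with finiteness of the lower sets ensured after fixing the central character and the degree, the Euler characters form a basis as well.
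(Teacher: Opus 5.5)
There is a genuine gap, and it is in your first reduction. Kac characters are \emph{not} a $\Bbb Z$-basis of $\Lambda^{\pm}_{m,n}$. Each $ch\,K(\chi)=\prod_{i,j}(1+y_j/x_i)\,s_\lambda(x)s_\mu(y)$ is divisible by $\prod_{i,j}(x_i+y_j)$, so finite combinations of Kac characters only fill the ideal $\ker\varphi$ of supersymmetric polynomials vanishing at $x_m=-y_n$. An Euler character with $r<m$ is not in this ideal. By Corollary \ref{stab}, $\varphi$ sends it to the corresponding Euler character of $\frak{gl}(m-1,n-1)$; already for $\frak{gl}(1,1)$ one has $ch\,E_{\emptyset,\emptyset}=1$. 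So your assertion that $ch\,E_{\lambda,\mu}$ is an integral combination of Kac characters ``after further expansion'' is false. By Corollary \ref{EulerKac} that expansion runs over all $C\subset\Bbb Z_{\le -d(p)}$ with $|C|=m-r$, which is an infinite sum.

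The triangularity step does not survive a switch to irreducible characters either. The leading-weight map $P(m,n)\to P^+$ is not injective. For $\frak{gl}(1,1)$, $ch\,E_{\emptyset,\emptyset}=ch\,L(0)$ and $ch\,E_{(0),(0)}=ch\,K(0)=1+y_1/x_1$ both have highest weight $0$. In general, $E(p)$ with $m(p)<m$ has the same highest weight as the Kac module $K(g)$ that is its leading term (see the proof of Proposition \ref{irreducible}), and $K(g)$ is itself one of the Euler characters. Moreover, lower sets in the dominance order are infinite inside an atypical block of fixed degree; for $\frak{gl}(1,1)$ these are all $a(\varepsilon_1-\delta_1)$ with $a\le 0$. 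So the finiteness you invoke is unavailable. A degree-by-degree dimension count also fails, because the graded pieces have infinite rank.

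The missing idea is to use $\varphi$ and induct on $\min(m,n)$, instead of looking for one triangular matrix. The paper gives no argument of its own here (it cites \cite{Ser}), but the stability property it records as Corollary \ref{stab} is exactly what is needed.

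First, if $F\in\ker\varphi$, then by $S_m\times S_n$-symmetry $F$ vanishes on every hyperplane $x_i=-y_j$. Hence $F=\prod_{i,j}(1+y_j/x_i)\,h$ with $h$ symmetric in $x$ and in $y$ separately. So the Kac characters (the labels with $r=m$) form a $\Bbb Z$-basis of $\ker\varphi$.

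Second, $P(m,n)$ is the disjoint union of these Kac labels and $P(m-1,n-1)$. The map $\varphi$ sends $ch\,E_{\lambda,\mu}$ with $r<m$ to the Euler character with the same label for $\frak{gl}(m-1,n-1)$, and by induction these form a basis of $\Lambda^{\pm}_{m-1,n-1}$.

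A kernel--image argument then shows that all $ch\,E_{\lambda,\mu}$ form a basis of $\Lambda^{\pm}_{m,n}$. The base case $\min(m,n)=0$ is the Schur basis of symmetric Laurent polynomials. The substantive input is the stability statement itself, which \cite{Ser} derives from a Jacobi--Trudi type formula; it does not come out of a leading-term computation.
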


\begin{remark}
If $r=m,\,s=n $, then $E_{\lambda,\mu}$ is  a Kac module. If $\lambda$ is  a partition such that $\lambda_{m+1}\le n$ and $SP_{\lambda}(x,y)$ is  a Schur  superfunction then the following equality holds true
$$
SP_{\lambda}(x,-y)=ch\,E_{\tau,\sigma}(x,y),\quad \tau=(\lambda_1,\dots,\lambda_r),\,\,\sigma=(\lambda_1'-r,\dots,\lambda_s'-r)
$$
where $r=i(\lambda),s=j(\lambda)$ and $-y=(-y_1,\dots,-y_n)$(see \cite{SV2}).
 \end{remark}
The following corollary is one of the most important property of $E_{\lambda,\mu}$.
\begin{corollary}\label{stab} Let $\varphi: \Lambda_{m,n}^{\pm}\rightarrow \Lambda_{m-1,n-1}$ be the canonical homomorphism $\varphi(x_m)=-\varphi(y_n)$. Then 

$1)$
$
\varphi(E_{\lambda,\mu}(x_1,\dots,x_m,y_1,\dots,y_n))=E_{\lambda,\mu}(x_1,\dots,x_{m-1},y_1,\dots,y_{n-1})
$

$2)$  $\varphi(E(\lambda,\mu))=0$ if $(p,q)=(m,n)$.
\end{corollary}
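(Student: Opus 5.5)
We work directly from formula (1). Multiply both sides by $\Delta(x)\Delta(y)$ and write the bracketed expression as an alternating sum over $W_0=S_m\times S_n$: by Serganova's formula, $ch\,E_{\lambda,\mu}\,\Delta(x)\Delta(y)$ equals $\sum_{w\in W_0}\varepsilon(w)\,w(F_{\lambda,\mu})$, where
$$
F_{\lambda,\mu}=\prod_{(i,j)\in D_+}\Bigl(1+\frac{y_j}{x_i}\Bigr)\prod_{(i,j)\in D_-}\Bigl(1+\frac{x_i}{y_j}\Bigr)x_1^{\lambda_1}\cdots x_r^{\lambda_r}y_1^{\mu_1}\cdots y_s^{\mu_s}x^{\rho_m}y^{\rho_n}.
$$
Since $\varphi$ is only defined on supersymmetric functions, we first perform the substitution $x_m=t$, $y_n=-t$ on the polynomial identity and then divide. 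We have
$$
\Delta(x)\Delta(y)\big|_{x_m=t,y_n=-t}=\Delta(x')\Delta(y')\prod_{i<m}(x_i-t)\prod_{j<n}(y_j+t).
$$
So part 1) amounts to showing that the alternating sum evaluated at $x_m=t$, $y_n=-t$ equals $\prod_{i<m}(x_i-t)\prod_{j<n}(y_j+t)$ times the corresponding alternating sum for $(m-1,n-1)$ with the same $(\lambda,\mu)$. Note that $r-1$ would not be the right shift: the pair $(r,s)$ stays fixed, which is possible only if $r<m$ and $s<n$. This is exactly where case 2) (with $(r,s)=(m,n)$, the Kac module case) separates off.

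\textbf{Key step.} Split $W_0$ according to $a=w^{-1}(m)$ and $b=w^{-1}(n)$. The factor $1+y_j/x_i$ (for $i\le r$) or $1+x_i/y_j$ (for $i>r$, $j\le s$) becomes $1-t/t=0$ after substitution whenever $w$ sends an index pair of $D_+\cup D_-$ to $(m,n)$. The pairs $(a,b)$ not in $D_+\cup D_-$ are exactly those with $a>r$ and $b>s$. When $r=m$, every $(a,b)$ lies in $D_+$, so every term vanishes, which proves 2). When $r<m$, we regroup the surviving terms with $a>r$, $b>s$. By the $S_{m-r}\times S_{n-s}$ symmetry of the parabolic part we may take $a=m$, $b=n$ up to a sign and an overall factor. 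The product over the pairs with $i=m$ or $j=n$ then gives $\prod_{j\le s}(1+t/y_j)$ and $\prod_{i\le r}(1-t/x_i)$, and the pairs $(i,n)$ with $i>r$ contribute nothing. The monomials $x^{\rho_m}y^{\rho_n}$ in the last variables contribute $t^{0}$, and the shift of $\rho$ produces the remaining powers.

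\textbf{Main obstacle.} The terms with $a\neq m$ or $b\neq n$ but still $a>r$, $b>s$ do not individually factor. We would handle them by first summing over the subgroup $S_{m-r}\times S_{n-s}$, which acts trivially on $F_{\lambda,\mu}$ apart from $\rho$. That inner sum turns the $\rho$-part into $\Delta$ of the last $m-r$ $x$'s times $\Delta$ of the last $n-s$ $y$'s, times the product of the $D_-$ factors and the missing $\prod_{i,j>}$ structure. Equivalently, one recognizes that inner sum as the numerator of the Euler character of the trivial $\frak{gl}(m-r,n-s)$-block, which is $\prod_{i>r,j>s}(x_i+y_j)$ up to Vandermondes. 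Under $x_m=-y_n=t$ this block factor is exactly supersymmetric-compatible and yields the needed $\prod_{i<m}(x_i-t)\prod_{j<n}(y_j+t)$ after the outer coset sum. Getting the signs and the $\rho$ shifts to line up in this step is where the real bookkeeping lies.
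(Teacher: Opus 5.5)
Your reduction is correct as far as it goes. After substituting $x_m=t$, $y_n=-t$ into $\Delta(x)\Delta(y)\,ch\,E_{\lambda,\mu}=\sum_{w\in W_0}\varepsilon(w)w(F_{\lambda,\mu})$, every $w$ with $(w^{-1}(m),w^{-1}(n))\in D_+\cup D_-$ contributes $0$. For $r=m$ this is every $w$, which proves 2). This direct route differs from the paper's, which just invokes the Jacobi--Trudi type formula of \cite{Ser} and the explicit Kac character.

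For 1), however, the decisive computation is left as ``bookkeeping'', and both mechanisms you offer for it are wrong.

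First, you cannot reduce to $a=m$, $b=n$ ``up to a sign and an overall factor''. With $H=S_{m-r}\times S_{n-s}$, the function $F_{\lambda,\mu}$ is not $H$-invariant because of $x^{\rho_m}y^{\rho_n}$. A surviving $w$ with $w^{-1}(m)=a$, $w^{-1}(n)=b$ picks up $t^{m-a}(-t)^{n-b}$ from this monomial, so the various $(a,b)$ really are different. The factors $\prod_{i\le r}(1-t/x_i)\prod_{j\le s}(1+t/y_j)$ you extract only supply $(x_i-t)$ for $i\le r$ and $(y_j+t)$ for $j\le s$, after absorbing one power of $x_i$, $y_j$ from $\rho$. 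The factors with $r<i<m$ and $s<j<n$ must come from combining different $(a,b)$, and you do not say how.

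Second, the $H$-alternation of $F_{\lambda,\mu}$ contains no factor $\prod_{i>r,j>s}(x_i+y_j)$; such a factor belongs to a Kac module of the block, not to its trivial module. If it were there, each surviving $w$ would turn it into a product containing $x_m+y_n$. The whole sum, and with it $\varphi(ch\,E_{\lambda,\mu})$, would then vanish, contradicting 1). Nor can the block produce all of $\prod_{i<m}(x_i-t)\prod_{j<n}(y_j+t)$, since the part with $i\le r$, $j\le s$ already comes from $D_\pm$.

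Your idea of alternating over $H$ first, which turns the $\rho$-part into $\Delta(x'')\Delta(y'')$, is the right one, and the argument closes as follows.

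Put $G=\prod_{D_+}(1+y_j/x_i)\prod_{D_-}(1+x_i/y_j)\,x^\lambda y^\mu$, which is $H$-invariant, and $M=\prod_{i\le r}x_i^{m-i}\prod_{j\le s}y_j^{n-j}$. Then $\sum_{h\in H}\varepsilon(h)h(F_{\lambda,\mu})=GM\,\Delta(x'')\Delta(y'')$, where $x''=(x_{r+1},\dots,x_m)$ and $y''=(y_{s+1},\dots,y_n)$.

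Vanishing is constant on cosets $wH$. The surviving cosets are exactly $w_0H$ with $w_0\in S_{m-1}\times S_{n-1}$, in bijection with $(S_{m-1}\times S_{n-1})/H'$ where $H'=S_{m-r-1}\times S_{n-s-1}$, and these $w_0$ commute with the substitution. Now
\begin{align*}
GM\big|_{x_m=t,\,y_n=-t}&=G'M'\prod_{i\le r}(x_i-t)\prod_{j\le s}(y_j+t),\\
\Delta(x'')\Delta(y'')\big|_{x_m=t,\,y_n=-t}&=\Delta(\hat x)\Delta(\hat y)\prod_{r<i<m}(x_i-t)\prod_{s<j<n}(y_j+t).
\end{align*}
Here $G'$, $M'$ are the same objects for $(m-1,n-1)$ and the same $(\lambda,\mu)$, with $\hat x=(x_{r+1},\dots,x_{m-1})$ and $\hat y=(y_{s+1},\dots,y_{n-1})$. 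Moreover $G'M'\Delta(\hat x)\Delta(\hat y)=\sum_{h'\in H'}\varepsilon(h')h'(F'_{\lambda,\mu})$, where $F'_{\lambda,\mu}$ is the function of formula (1) for $(m-1,n-1)$.

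The two prefactors multiply to $\prod_{i<m}(x_i-t)\prod_{j<n}(y_j+t)$, which is $S_{m-1}\times S_{n-1}$-invariant. Summing over $(S_{m-1}\times S_{n-1})/H'$ therefore gives this product times $\Delta(x')\Delta(y')\,ch\,E_{\lambda,\mu}(x',y')$. Cancelling against your substituted Vandermonde gives 1).

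With this step supplied, your proof from formula (1) is complete and self-contained, while the paper's rests on the determinantal formula of \cite{Ser}.
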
 
\begin{proof}Follows from the Jacoby-Trudy type formula (\cite{Ser}) and explicit formula for Kac modules.
\end{proof}

\section{Weight and cap diagrams}

We will use the language of the weight  and cap diagrams due to Brundan and Stroppel  \cite{BS}  but we will use it here in the  form due to I. Musson and V. Serganova \cite{MS}.
Let $(\lambda,\mu)\in P(m,n)$,
$
\lambda=(\lambda_1,\dots,\lambda_p),\,\,\mu=(\mu_1,\dots,\mu_q)
$.
 Then we can define two finite sets
$$
 A_{\lambda}=\{\lambda_1,\lambda_2-1,\dots,\lambda_p+1-p\},\,
 $$
 $$
 B_{\mu}=\{q-p-\mu_q,q-1-p-\mu_{q-1},\dots,1-p-\mu_1\}
$$
\begin{definition}\label{weight}Let $(A,B)$ be a pair of finite  subsets in $\Bbb Z$.  Then the  corresponding weight  diagram is the following function on $\Bbb Z$
$$
f(x)=\begin{cases} \times,\,\,x\in A\cap B\\
\circ,\,\,x\notin A\cup B\\
>,\,\,x\in A\setminus B\\
<,\,\, x\in B\setminus A
\end{cases}.
$$
We see from the definition that
$$
A=f^{-1}(>,\times),\quad B=f^{-1}(<,\times).
$$
For every diagram $f$ let us  define the following numbers:  $m(f)=|f^{-1}(\times,>)|$; $n(f)=|f^{-1}(\times,<)|$; super-dimention $d(f)=m(f)-n(f)$.
\end{definition}

 We also define  for  every weight diagram  a sign. 
 Let $X,Y\subset\Bbb Z$ and $X\cap Y=\emptyset$.  Let us set
$$
\delta(X,Y)=\sum_{x\in X,\,y\in Y}\delta(x,y),\quad \delta(x,y)=\begin{cases} 1,\text{if}\, x<y\\
0,\text{otherwise}
\end{cases}.
$$
\begin{definition}
For every  weight diagram $f$ we set $\varepsilon(f)=(-1)^{\theta(f)}$, where 
$$
\theta(f)=S(f^{-1}(\times))-\delta(f^{-1}\{>,<\},f^{-1}(\times))+
\frac12|f^{-1}(\times)|(|f^{-1}(\times)|-1)
$$
$$
+|f^{-1}(<)||f^{-1}(\times)| 
$$
and $S(f^{-1}(\times))$ means the sum of all element in $f^{-1}(\times)$.
\end{definition}
 
\begin{proposition}  The correspondence  
$
(\lambda,\mu)\rightarrow (A_{\lambda}, B_{\mu})
$
gives a bijection between $P(m,n)$ and the set $\mathcal E(m,n)$  consisting of all  weight diagrams $f$ such that $m(f)\le m,\,n(f)\le n, d(f)=m-n$.
\end{proposition}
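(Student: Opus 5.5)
Given $(\lambda,\mu)\in P(m,n)$ with $\lambda=(\lambda_1,\dots,\lambda_p)$ and $\mu=(\mu_1,\dots,\mu_q)$, $p\le m$, $q\le n$, $p-q=m-n$, I first check that $A_\lambda$ and $B_\mu$ are honest sets of the right size. Since $\lambda$ is non-increasing, $\lambda_i+1-i>\lambda_{i+1}+1-(i+1)$, so $A_\lambda$ is a strictly decreasing list and $|A_\lambda|=p$. Likewise $B_\mu=\{j-p-\mu_j\}_{j=1}^q$, and $j\mapsto j-p-\mu_j$ is strictly increasing because $\mu_j\ge\mu_{j+1}$, so $|B_\mu|=q$. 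By Definition \ref{weight} the diagram $f$ built from $(A_\lambda,B_\mu)$ satisfies $A=f^{-1}(>,\times)$ and $B=f^{-1}(<,\times)$. Hence $m(f)=p\le m$, $n(f)=q\le n$ and $d(f)=p-q=m-n$, so $f\in\mathcal E(m,n)$.

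For injectivity I note that a diagram $f$ determines the pair $(A,B)=(f^{-1}(>,\times),f^{-1}(<,\times))$, so it suffices that $(\lambda,\mu)\mapsto(A_\lambda,B_\mu)$ is injective. From $A$ we read off $p=|A|$, and listing $A$ in decreasing order $a_1>\dots>a_p$ recovers $\lambda_i=a_i+i-1$. From $B$ we read off $q=|B|$, and listing $B$ increasingly $b_1<\dots<b_q$ recovers $\mu_j=j-p-b_j$. The value of $p$ is already known from $A$, which is the one point to watch: the formula for $B_\mu$ involves $p$, so $\mu$ is determined only after $\lambda$ (equivalently $p$) is. Since also $q=p-(m-n)$ is forced, everything is consistent.

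For surjectivity, take any $f\in\mathcal E(m,n)$ and put $A=f^{-1}(>,\times)$, $B=f^{-1}(<,\times)$, $p=|A|$, $q=|B|$; then $p\le m$, $q\le n$, $p-q=m-n$. I define $\lambda_i=a_i+i-1$ and $\mu_j=j-p-b_j$ as above. Then $\lambda_i-\lambda_{i+1}=a_i-a_{i+1}-1\ge0$ and $\mu_j-\mu_{j+1}=b_{j+1}-b_j-1\ge 0$, so both sequences are non-increasing integer sequences. Thus $(\lambda,\mu)\in P(m,n)$, and by construction $A_\lambda=A$ and $B_\mu=B$, so $(\lambda,\mu)$ maps to $f$.

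There is no real obstacle here. The only care needed is the bookkeeping of the shift by $p$ in $B_\mu$ and the fact that diagrams correspond bijectively to pairs of finite subsets, which holds because $f^{-1}(\times)=A\cap B$ and so on recover $A$ and $B$ uniquely.
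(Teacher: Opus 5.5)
Your proof is correct and is exactly the direct verification the paper has in mind; the paper's proof consists only of the words ``Direct verification.'' You check that $i\mapsto\lambda_i+1-i$ and $j\mapsto j-p-\mu_j$ are strictly monotone, so $|A_\lambda|=p$ and $|B_\mu|=q$, invert via $\lambda_i=a_i+i-1$ and $\mu_j=j-p-b_j$, and correctly handle the one subtle point, namely that $B_\mu$ depends on $p=|A_\lambda|$.
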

\begin{proof} Direct verification.
\end{proof}
So for any $f\in\mathcal E(m,n)$ we will denote by $E(f)$ the corresponding Euler character. If $m(f)=m,\,n(f)=n$ then $E(f)=K(f)$ is the character of Kac module.

\begin{definition} Let $h$ be a diagram and $C\subset h^{-1}(\circ)$. Then by $h*C$ we will denote  the following diagram
$$
(h*C)^{-1}(x)=\begin{cases}
h^{-1}(x),\,\text{if}\,x=<,>\\
h^{-1}(x)\cup C,\, \text{if} \,x=\times\\
h^{-1}(x)\,\setminus C,\,\text{if}\, x=\circ
\end{cases}.
$$
\end{definition}

 The following Theorem is one of the main results of the paper  \cite{Ser3}. It uses the canonical bilinear form on the representation ring.
\begin{thm} \label{KacEuler} Let $K(f)$ be a Kac module and $E(p)$ be the Euler module then
\begin{equation}\label{main1}
(ch\,K(f),ch\, E(p))=\begin{cases}(-1)^{\theta(f)+\theta(p)}\, \text{if}\,f=p*C,\,\,\, \,C\subset p^{-1}({\circ})\cap\Bbb Z_{\le -d(p)}\\
0,\,\,\text{otherwise}
\end{cases}\end{equation}.
 \end{thm}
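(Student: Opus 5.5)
The plan is to compute the pairing directly from the explicit formula (1) for Euler characters: expand $ch\,E(p)$ as a combination of Kac characters, and then read off the pairing from the orthogonality of Kac characters with respect to the canonical form. I will take the canonical bilinear form on $\Lambda^{\pm}_{m,n}$ in its integral (constant-term) realization. It should then follow from (1) with $r=m,\,s=n$ that the Kac characters $ch\,K(f)$, $f\in\mathcal P(m,n)$, are orthonormal up to the sign $(-1)^{\theta(f)}$ against the dual family. I will either take this as the normalization of the form or verify it by a direct constant-term computation. Either way, proving (\ref{main1}) reduces to computing the coefficients of $ch\,E(p)$ in the Kac basis.

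\textbf{Step 1 (rewrite the Euler product).} Let $p$ correspond to $(\lambda,\mu)$ with lengths $r,s$, where $r-s=m-n$. For $(i,j)\in D_-$ use
$$1+\frac{x_i}{y_j}=\frac{x_i}{y_j}\Bigl(1+\frac{y_j}{x_i}\Bigr).$$
This turns the product over $D_+\cup D_-$ into the full Kac product $\prod_{i,j}(1+y_j/x_i)$ divided by the factor $\prod_{i>r,\,j>s}(1+y_j/x_i)$ over the missing block, times a monomial. That missing block is exactly the odd part of $\frak p$, and it has size $(m-r)\times(n-s)$ with $m-r=n-s$. Expanding the inverse of this square block in the appropriate direction, via a Cauchy-type identity for the alternant, should produce a sum over monomials $x^{\alpha}y^{\beta}$ for which $\alpha$ and $\beta$ extend $\lambda$ and $\mu$ simultaneously. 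Each such term has the same shape as the numerator of a Kac character, so $ch\,E(p)\,\Delta(x)\Delta(y)$ becomes an alternating sum of Kac numerators.

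\textbf{Step 2 (straightening).} Apply the $S_m\times S_n$ alternation to each term. A term either cancels, because the resulting sequence has a repeated entry, or straightens, with a sign, to a dominant Kac highest weight $f$. In the language of weight diagrams, passing from $(\lambda,\mu)$ to $(A_\lambda,B_\mu)$, the extra entries should add the same integer to both $A$ and $B$. Such an integer is necessarily a $\circ$-position of $p$, and adding it turns that position into $\times$. So the surviving terms are $f=p*C$ for subsets $C\subset p^{-1}(\circ)$ with $|C|=m-r$.

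\textbf{Step 3 (main obstacle).} The hard part is to show two things. First, the surviving sets $C$ are exactly those contained in $\Bbb Z_{\le -d(p)}$, with every other contribution cancelling. Second, each survivor appears with coefficient exactly $\pm1$, and the sign is $(-1)^{\theta(f)+\theta(p)}$. For the first point I expect the bound to come from the direction of the geometric expansion in Step 1: the new $x$-exponents can only lie below $\lambda_r$ shifted by $\rho$, which in $A$-coordinates means positions at most $r-m$ relative to the reference point, that is, $\le -d(p)$. For the sign, I would first treat $|C|=1$, where the count is a single transposition past the entries of $A$ and $B$. This count should reproduce the terms $\delta(f^{-1}\{>,<\},f^{-1}(\times))$, $S(f^{-1}(\times))$ and $|f^{-1}(<)||f^{-1}(\times)|$ appearing in $\theta$. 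I would then induct on $|C|$, tracking the quadratic term $\frac12|f^{-1}(\times)|(|f^{-1}(\times)|-1)$ from reordering the crosses among themselves. Combining this expansion with the orthonormality of Kac characters gives (\ref{main1}).
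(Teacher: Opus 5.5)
Your route is the reverse of the paper's, which gives no proof here: it quotes the statement from \cite{Ser3}, then combines it with orthonormality of Kac characters to derive Corollary~\ref{EulerKac}. Deriving that expansion straight from (1) and reading off the pairing is a sensible, more self-contained plan. As written, though, it has two gaps.

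\textbf{The form.} Kac characters span only $\prod_{i,j}(1+y_j/x_i)\cdot\bigl(\Bbb Z[x^{\pm1}]^{S_m}\otimes\Bbb Z[y^{\pm1}]^{S_n}\bigr)$. This span misses $ch\,E(p)$ when $m(p)<m$: such $E(p)$ survive $x_m=-y_n$ by Corollary~\ref{stab}, while every Kac character is killed. So orthonormality alone does not determine $(ch\,K(f),ch\,E(p))$. You need the form to be continuous for the completion in which your infinite Kac expansion converges; the paper uses this tacitly in deducing Corollary~\ref{EulerKac}. The choice decides the answer. Expanding the block factor in powers of $x_i/y_j$ instead of $y_j/x_i$ gives an equally valid identity $ch\,E(p)=\sum_{C\subset\Bbb Z_{>n-m}}\pm\,ch\,K(p*C)$ in the opposite completion. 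So ``the appropriate direction'' must come from the definition of the form (e.g.\ the expansion convention of its constant-term realization), not from the desired conclusion. Also, the case $p=f$ of the statement forces $(ch\,K(f),ch\,K(f))=1$, with no sign $(-1)^{\theta(f)}$.

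\textbf{Step 3.} Step 3 carries all the content and is left as expectations. Here is how it goes. Put $x'=(x_{r+1},\dots,x_m)$, $y'=(y_{s+1},\dots,y_n)$ and $k=m-r=n-s$. Alternate first over $S_k\times S_k$. By Cauchy's determinant the block becomes $\prod_{i>r}x_i^{s}\Delta(x')\Delta(y')/\prod_{i>r,j>s}(1+y_j/x_i)=\prod_{i>r}x_i^{n}\det\bigl(1/(x_i+y_j)\bigr)$. Expand $1/(x_i+y_j)=\sum_{a\ge0}(-1)^a y_j^{a}x_i^{-a-1}$. This yields paired exponents $n-1-a$ for $x_i$ and $a$ for $y_j$, i.e.\ a new $\times$ at $c=n-m-a$.
\begin{itemize}
\item $C\subset\Bbb Z_{\le n-m}$ holds automatically; nothing has to cancel. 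The bound is independent of $\lambda$, so your ``below $\lambda_r$'' heuristic is not the mechanism. New crosses may lie to the right of points of $A_\lambda$, and that is where the $\delta$-term comes from.
\item Each $k$-set $C$ arises from $(k!)^2$ terms of equal sign. This cancels the factor $1/(k!)^2$ from replacing the coset alternation by the full one, so every coefficient is $\pm1$.
\item The sign does not come out as you predict. It is $(-1)^{\sum_{c\in C}(n-m-c)}$ times $(-1)^{\#\{(u,c)\in A_\lambda\times C:\,u<c\}+\#\{(v,c)\in B_\mu\times C:\,v>c\}+\binom k2}$. Mod $2$ this exponent is $\theta(p*C)+\theta(p)+|C|(n-m)$, not $\theta(p*C)+\theta(p)$.
\end{itemize}
For example, take $\frak{gl}(2,1)$ and $p$ a single $>$ at $0$. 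Formula (1) gives $ch\,E(p)=1=ch\,K(p*\{-1\})-ch\,K(p*\{-2\})+\cdots$, yet $(-1)^{\theta(p*\{-1\})+\theta(p)}=-1$. The extra factor $(-1)^{(m-m(p))(n-m)}$ depends only on $p$, so it amounts to a renormalization of $E(p)$ (or of $\theta$). Still, your proof must fix that normalization explicitly. The count will not reproduce $\theta$ on the nose, so the sign step as planned does not close.
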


We also need  the notion of   the cap diagram \cite{BS}.
\begin{definition} Let $f$ be a weight diagram.  The corresponding cap diagram can be obtained in the following way. Take the rightmost  $\times$ and make the cap by joining it to the first $\circ$ on the right. Then take the next $\times $ to the left  and make the cap by joining it  to the first $\circ$ on the right which is not the end of a cap, and so on. 

We can also define the cap diagram in another way. Let $f(a)=\times$. Let us take the first $c$ on the right such that $f(c)=\circ$ and  the numbers of $\times$-s and $\circ$-s in the interval $(a,c)$  are the same. Then draw the cap joining $a$ and $c$. 
\end{definition}

If $f(a)=\times$ we will denote  by $C_a$ the cap with the initial point $a$. The final point of the cap $C_a$ will be denote by $\tilde a$. 

 In order to define the rising operators   we need the notion of a numbered weight  diagram.  A numeration  of $f$ is a  bijection $\xi: \mathcal [1,r]\rightarrow f^{-1}(\times)$ where $r=| f^{-1}(\times) |$.  We also have a partial  order on the $f^{-1}(\times)$ : $a\dashv a'$ means that $C_a\supset C_{a'}$ (in other words $a\le a'$ and $\tilde a'\le \tilde a$). Using this partial order we can define connected component of the weight diagram.  If weight diagram is connected   then it has the only minimal element.

\begin{definition}
 We will call a numeration  $\xi$  admissible  if  it satisfies the following condition : if $\xi(i)\dashv\xi(j)$ then $i<j$. Every weight diagram has a unique admissible numeration when $\xi$ is a monotonic function. In other words $i<j$ implies that $\xi(i)<\xi(j)$. We will call it the standard numeration.
 \end{definition}

 The rising operators $R_i$ were  define by Brundan  (\cite{B}).

\begin{definition} Let  $f$  be  a numbered  weight  diagram  and $r=|f^{-1}(\times)|$.  If $i\in [1,r]$ then  we can define the rising operator $R_i$  by the following rule:  $R_i(f)$ is  the weight diagram  obtaining  from $f$ by moving the cross $\xi(i)$ to the end of the cap with the initial  point $\xi(i)$. 
\end{definition}

\begin{definition} Let $f$ be a weight diagram with standard numeration and $g$ be any diagram. Let us denote  by $X(g,f)$  the set of sequences $(i_1,\dots,i_r)\in\Bbb Z^r$ such that $g=R_1^{i_1}\dots R_r^{i_r}(f)$.
\end{definition}

\section{Creation operators and Su-Zhang formula}
In this section we give one more prove a the formula  for the decomposition of irreducible   module in terms of Kac modules \cite{SZ,Ser2}.

The next definition gives the most useful tool to deal with  the rising operators. 

\begin{definition}\label{create} Let $f$ be a  weight diagram.   Let us define  creation operators $P_a(f)$ for $a\in \Bbb Z\setminus f^{-1}\{>,<\}$ by the following rules:

$1)$ $ (P_a f)^{-1}(>)=f^{-1}(>),\,\, ( P_a f)^{-1}(<)=f^{-1}(<);\,\,$

$2)$ if $b< a$ then $ (P_af)(b)=f(b)$;

$3)$  Set $M=\{b\in f^{-1}(\times)\mid b\ge a\}$. The corresponding new positions  of the elements of $M$ can be define by the following rule. Take the rightmost  element  $b$ from $M$ and   move it to  the second  $\circ $ on the right.  Then do  the same for the  rightmost  element in   $M\setminus\{b\}$ and  so on. 
\end{definition}

For any diagram $f$ we will denote by $f_{core}$  the diagram obtaining from $f$ by replacing all crosses by symbols $\circ$.

\begin{example}
Consider the following  cap diagram $f$
$$
\xymatrix{\times&>&> &\circ &\times &<& \circ}
$$
We want to construct it by means of creation operators.
We start with the following diagram $f_{core}$
$$
\xymatrix{\circ&>&> &\circ &\circ &<& \circ}.
$$
Then we  see that $P_1(f_{core})$ is the following diagram
$$
\xymatrix{\times &>&>&\circ &\circ &<& \circ}. 
$$
 In the same way   $P_1P_1(f_{core})$ we have 
$$
\xymatrix{\times&>&>&\circ &\times &<& \circ}
$$
Therefore  $f=P_1P_1(f_{core})$.

It is  also easy to check  that  $f=P_5 P_1(f_{core})$. 
\begin{remark} 

Actually it is easy to check that  any diagram can be represented in the form
$$
f=P_{b_r}\dots P_{b_1}(f_{core})
$$
where $f^{-1}(\times)=\{b_1<\dots,b_r\}$.
\end{remark}
\end{example}
The following Lemma allows to express the action of the operators  $R_i$ in terms of creation operators.

\begin{lemma}\label{main2}   Let  $f=P_{b_r}\dots P_{b_1}(f_{core})$ where $f^{-1}(\times)=\{b_1<\dots,b_r\}$. Then  the following equality holds true
\begin{equation}\label{mainq}
R_1^{i_1}\dots R_r^{i_r}(P_{b_r}\dots P_{b_1}(f_{core}))=P_{a_r}\dots P_{a_1}(f_{core})
\end{equation}
where $a_p-n(a_p)=b_p-n(b_p)+i_p,\,p=1,\dots,r.$
\end{lemma}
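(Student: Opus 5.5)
The plan is to first strip away the core symbols $>$ and $<$, and then prove the statement by induction on $r$ using one commutation lemma for creation operators. I read $n(a)$ as the number of core positions (those where $f$ is $>$ or $<$) lying to the left of $a$. With that reading, $c(a)=a-n(a)$ is an order preserving bijection from $\Bbb Z\setminus f^{-1}\{>,<\}$ onto $\Bbb Z$.

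\textbf{Step 1 (removing the core).} The creation operators $P_a$, the cap diagram and the rising operators $R_i$ only count $\times$-s and $\circ$-s; the core symbols are never moved and never counted. So all three constructions commute with relabelling free positions by $c$. This reduces (\ref{mainq}) to the case where $f_{core}$ is the empty diagram (all $\circ$). The required relation then becomes simply $a_p=b_p+i_p$. From now on I work on this core free line.

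\textbf{Step 2 (reading $P_a$ through caps).} Next I describe $P_a$ in terms of caps. In a core free diagram $h$, consider the crosses $\ge a$ together with the circles their caps end at. The operator $P_a$ puts a new $\times$ at $a$ and moves each old cross $b\ge a$ to ``the second $\circ$ on the right''. I want to check by induction on the number of crosses $\ge a$ that this shifts the whole block of caps to the right of $a$ rigidly. As a result, the new cross at $a$ gets a cap $C_a$ whose end $\tilde a$ is the first $\circ$ after that shifted block. In particular the cap structure of $P_a h$ is that of $h$ with one extra cap inserted at $a$, and this inserted cap encloses no cap starting to its left.

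Using this, I track crosses by creation order: the $p$-th cross is the one produced by $P_{a_p}$ and then transported by the later operators. For the diagram $P_{b_r}\cdots P_{b_1}(f_{core})$ with $b_1<\dots<b_r$, creation order coincides with the standard numeration. More generally, creation order is always an admissible numeration, because a cap created later is never enclosed by one created earlier to its left. This is what lets $R_i$ be interpreted consistently after some crosses have already been moved.

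\textbf{Step 3 (key identity and induction).} The key identity is
$$
R_p\,P_{a_r}\cdots P_{a_1}(f_{core})=P_{a_r}\cdots P_{a_p+1}\cdots P_{a_1}(f_{core}),
$$
where $R_p$ moves the $p$-th cross in creation order. Iterating it, applying $R_r$ first as in (\ref{mainq}), gives the lemma with $a_p=b_p+i_p$.

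\begin{itemize}
\item \emph{Base case $p=r$.} The last created cross sits at $a_r$, and its cap ends at the first $\circ$ after the rigidly shifted block. Moving it there amounts to creating it at $a_r+1$ instead, because $P_{a_r+1}$ shifts the same block in the same way once the cross at $a_r$ has been removed.
\item \emph{General $p$.} I induct on the number of operators applied after $P_{a_p}$. This needs the commutation $P_c\circ R_p=R_p\circ P_c$ on a diagram, where the cross and its cap are transported by $P_c$. By Step 2, $P_c$ moves caps rigidly and inserts a new cap, so moving a cross to the end of its cap before or after inserting the new cap should give the same result.
\end{itemize}

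\textbf{Main obstacle.} I expect the commutation lemma in Step 3 to be the hard part. The delicate cases are when $c$ lies strictly inside the cap $C_b$ of the cross being raised, or exactly at $\tilde b$. In those cases raising $b$ changes which crosses are $\ge c$, so which crosses $P_c$ moves depends on the order of the two operations. I would first verify these cases by a careful count of $\times$-s and $\circ$-s in the intervals $(b,c)$ and $(c,\tilde b)$, using the second definition of caps (equal numbers of $\times$-s and $\circ$-s inside a cap). If a direct check becomes unwieldy, the fallback is to encode each diagram as a lattice path (up step for $\times$, down step for $\circ$), where both $P_a$ and the cap structure become local operations on the path.
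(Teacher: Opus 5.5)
You have the same architecture as the paper: pass to free positions, push each rising operator through the later creation operators, and induct. However, Step 2 misdescribes $P_a$, and as a result the key identity of Step 3 is false as stated.

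\textbf{What $P_a$ actually does.} In free coordinates, $P_a$ moves every cross $b\ge a$ by exactly two free positions. Processing from the right, the next two free positions after each such cross are $\circ$ at the moment it moves. So $P_a$ simply inserts an adjacent pair $\times\circ$ at $a$. The new cap is $(a,a+1)$ and encloses nothing; it does not end after the shifted block. In the paper's example $P_1P_1(f_{core})$, the new cap from $1$ ends at $4$, the next free position.

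\textbf{Admissibility.} Your reason for admissibility is reversed. In the core-free case $P_2P_1(f_{core})$ has crosses $\{1,2\}$, and the later cap is enclosed by the earlier one. What holds is that a later cap never encloses an earlier one.

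\textbf{Counterexample to the key identity.} Take the core-free case with $a=(1,1)$. Then $P_1P_1(f_{core})$ has crosses $\{1,3\}$, and the second created cross sits at $1$ with $\tilde 1=2$. So $R_2P_1P_1(f_{core})$ has crosses $\{2,3\}$, whereas $P_2P_1(f_{core})$ has crosses $\{1,2\}$. Your base-case argument fails exactly here. A cross of $h'=P_{a_{p-1}}\cdots P_{a_1}(f_{core})$ sitting at $a_p$ is shifted by $P_{a_p}$ but not by $P_{a_p+1}$.

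\textbf{Repair.} Prove the base case $R_pP_{a_p}h'=P_{a_p+1}h'$ only under the hypothesis that $h'$ has $\circ$ at $a_p$; with the insertion picture it is then immediate. This hypothesis holds throughout your iteration. While $R_p$ is being applied, the first $p-1$ entries are still $b_1<\dots<b_{p-1}<b_p\le a_p$, so $h'$ has crosses only to the left of $a_p$.

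The commutation $R_pP_c=P_cR_p$, which you single out as the main obstacle, is then easy in every case, including $c\in(b,\tilde b\,]$. Inserting a balanced pair $\times\circ$ does not change the matching of the other symbols. $R_p$ merely swaps the $p$-th cross with its matching $\circ$. These two positional operations commute.

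With these corrections your argument is essentially the paper's. There one inducts on $r$ and uses that $b_r$ is the rightmost cross, so that $R_r^{i_r}f=P_{a_r}(res\,f)$ by the one-cross case. One then commutes $P_{a_r}$ past $R_1,\dots,R_{r-1}$ (the rising operators ``jump over the cap with number $r$''), which is the same commutation you need.
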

\begin{proof}
Let $r=1$ and $f^{-1}(\times)=\{b\}$. Then  it is easy to check that  $R_1^{i}(P_b(f_{core}))=P_a(f_{core})$ where
$
a= b +n(a)-n(b)+i.
$
 Therefore in this case the formula is true. If $r>1$ then we  set $res\, f= P_{b_{r-1}}\dots P_{b_1}(f_{core})$. Since  $f$ has the standard numeration  and according to the case $r=1$  we have $R_{r}^{i_r}(f)=R_{a_r}(res\,f)$. By induction we have   
$$
R_1^{i_1}\dots R_{r-1}^{i_{r-1}}(res\,f)= P_{a_{r-1}}\dots P_{a_1}(f_{core})
$$
Therefore we need to prove the equality$$
R_1^{i_1}\dots R_{r-1}^{i_{r-1}}R_r^{i_r}(f)= P_{a_r}R_1^{i_1}\dots R^{i_{r-1}}_{r-1}(res\,f)
$$
When we apply all other rising operators  to $R_{r}^{i_r}(f)$ then these  operators have to jump  over the cap with number $r$. But this is the same as to apply operator  $P_{a_r}$ to the function  $R_1^{i_1}\dots R_{r-1}^{i_{r-1}}(res\,f)$. Therefore we proved equality (\ref{mainq}).
\end{proof}

\begin{definition} Let $a=(a_1,\dots,a_r)$ be a sequence of integers.
We will denote by $Y(a)$ the set of  integer sequences  $y=(y_1,\dots,y_r)$  such that : 

$1)$ $ y_1\le\dots\le y_r$ ;

$2)$   $ y_i\le a_i$.

\end{definition}

\begin{corollary}\label{cor}  Let $f,g$ be a weight diagrams such that  $f^{-1}\{<,>\}=g^{-1}\{<,>\}$. Then there  exists a bijection between $X(g,f)$ 
and sequences $(a_1,\dots,a_r)$ such that  $g=P_{a_r}\dots P_{a_1}(f_{core})$ and   $b\in Y(a).$

\end{corollary}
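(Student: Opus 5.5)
The plan is to build the bijection from Lemma \ref{main2}. Write $f^{-1}(\times)=\{b_1<\dots<b_r\}$, so that $f=P_{b_r}\dots P_{b_1}(f_{core})$ with the standard numeration. Since $f^{-1}\{<,>\}=g^{-1}\{<,>\}$, we have $f_{core}=g_{core}$. Rising operators never change the number of crosses, so $|g^{-1}(\times)|=r$ whenever $X(g,f)\neq\emptyset$.

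Given $(i_1,\dots,i_r)\in X(g,f)$, Lemma \ref{main2} gives
$$g=R_1^{i_1}\dots R_r^{i_r}(f)=P_{a_r}\dots P_{a_1}(f_{core}),$$
where $a_p$ is determined by $a_p-n(a_p)=b_p-n(b_p)+i_p$. Here $n(x)$ counts the symbols $<,>$ to the left of $x$ (or the analogous core count). So the map $x\mapsto x-n(x)$ is a strictly increasing bijection from $\Bbb Z\setminus f^{-1}\{<,>\}$ onto $\Bbb Z$ that depends only on $f_{core}$. It follows that $a_p$ is uniquely determined by $i_p$, and conversely $i_p=(a_p-n(a_p))-(b_p-n(b_p))$. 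This defines the map $X(g,f)\to\{a : g=P_{a_r}\dots P_{a_1}(f_{core})\}$. It is injective because $i$ is recovered from $a$.

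The rising operators only move crosses to the right, so $i_p\ge 0$, and this translates into $b_p\le a_p$. Together with $b_1<\dots<b_r$, this gives $b\in Y(a)$. I read the statement as saying the condition $b\in Y(a)$ cuts out exactly the image. The monotonicity $y_1\le\dots\le y_r$ is automatic for $y=b$, so the real content is $b_p\le a_p$.

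For surjectivity, take $a$ with $g=P_{a_r}\dots P_{a_1}(f_{core})$ and $b_p\le a_p$ for all $p$. Define $i_p=(a_p-n(a_p))-(b_p-n(b_p))$. These are nonnegative by monotonicity of $x\mapsto x-n(x)$. Applying Lemma \ref{main2} with these exponents gives $R_1^{i_1}\dots R_r^{i_r}(f)=g$, so $(i_1,\dots,i_r)\in X(g,f)$.

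The main obstacle is justifying that the exponents are nonnegative, i.e. that $X(g,f)$ only contains sequences with $i_p\ge0$, or else that negative powers of $R_i$ are excluded by convention. I would first try to settle this from the definition, since $R_i$ is only defined as a forward move. If negative exponents are allowed, the condition $b\in Y(a)$ must instead be read as encoding which sequences $a$ give valid compositions. I would then check, by induction on $r$ as in the proof of Lemma \ref{main2}, that $P_{a_r}$ applied after $P_{a_{r-1}}\dots$ is consistent, namely that the creation operators are applied in nondecreasing order of the corresponding steps. This is handled by peeling off the last cap (the $res\,f$ argument) and using the case $r=1$.
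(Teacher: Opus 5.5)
Your proposal is correct and follows the paper's proof essentially step for step. Like the paper, you use Lemma \ref{main2} to pass from $(i_1,\dots,i_r)$ to $(a_1,\dots,a_r)$ via $a_p-n(a_p)=b_p-n(b_p)+i_p$, use that $x\mapsto x-n(x)$ is strictly increasing off $f^{-1}\{<,>\}$ so that $i_p\ge0$ is equivalent to $b_p\le a_p$, and invert this relation for sufficiency. The paper simply takes the exponents to be nonnegative, as powers of the forward move $R_i$, so the fallback in your last paragraph is not needed.
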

\begin{proof} 
First let us prove that if $a,b\notin f^{-1}(<,>)$ then $b<a$ is equivalent to $b-n(b)<a-n(a)$. Indeed if $a>b$ then $a-b>n(a)-n(b)$ and $a-n(a)>b-n(b)$. If  $a-n(a)>b-n(b)$ and $b>a$ then $b-n(b)>a-n(a)$. So we came to contradiction.

Now  let  $(i_1,\dots,i_n)$  be such that $g=P_1^{i_1}\dots P_r^{i_r}(f)$. Then by   Lemma \ref{main2}  we get the sequence   $(a_1,\dots,a_r)$ such that
 $$
 a_p-n(a_p)=b_p-n(b_p)+i_p,\,p=1,\dots,r
 $$
 Therefore $a_p-n(p)-(b_p-n(b_p))=i_p\ge 0$. Therefore $a_p\ge b_p$.

 So we proved that conditions $1),2)$ are nessesary. Let us prove that they are sufficient. We only need to indicate the values of $(i_1,\dots,i_r)$. Let $b_p\le a_p$. Then  $a_p-n(a_p)-(b_p-n(b_p))=i_p\ge0$. Corollary is proved.
\end{proof}

By  the above Corollary in  order  to find the  number of solutions of the equation  $g=R_1^{i_1}\dots R_r^{i_r}(f)$  we need to find the number of sequences $(a_1,\dots,a_r)$ satisfying the conditions of  Corollary \ref{cor}. But the first conditions   $g=P_{a_r}\dots P_{a_1}(f_{core})$ is the most difficult to verify. So we replace it by another equivalent condition which is more easy to verify. First note that every presentation $g=P_{a_r}\dots P_{a_1}(g_{core})$  defines the numeration on the weight diagram $g$.

\begin{definition} Let $g=P_{a_r}\dots P_{a_1}(g_{core})$.  Then let us  define the numeration $\xi$ on the weight diagram $g$ by the following rule: $\xi(i)$ is the final position of the cross which was added by  the operator $P_{a_i}$.
\end{definition}

It turns out that we can replace  the sequence  $a_1,\dots, a_n$  by the corresponding numeration $\xi$.

\begin{lemma}  Let $g=P_{a_n}\dots P_{a_1}(g_{core})$ and $\xi$ be the corresponding numeration of $g$. Then for diagram $f=P_{b_r}\dots P_{b_1}(f_{core})$ with the standard numeration   relation $b\in Y(a)$ is equivalent to relation $b\in Y(\xi)$ where $\xi=(\xi(1),\dots,\xi(n)).$

\end{lemma}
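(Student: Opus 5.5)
We need to show: for $b$ standard (increasing) and $a$ a creation sequence for $g$ with numeration $\xi$, the conditions ($b$ nondecreasing and $b_i\le a_i$) are equivalent to ($b$ nondecreasing and $b_i\le \xi(i)$). Since $b_1<\dots<b_r$ holds automatically for the standard numeration, the claim reduces to a comparison between $a_i$ and $\xi(i)$. We always have $\xi(i)\ge a_i$, because a cross placed by $P_{a_i}$ starts at $a_i$ and is only moved to the right afterwards. So $b\in Y(a)$ implies $b\in Y(\xi)$ trivially, and the whole content is the converse: if $b_i\le \xi(i)$ for all $i$, then $b_i\le a_i$.

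The plan is to understand where the cross created by $P_{a_i}$ ends up. When $P_{a_j}$ with $j>i$ is applied, the cross $i$ moves exactly when its current position is $\ge a_j$. It then jumps to the second $\circ$ to its right, which means it jumps over a newly formed cap. First I would prove by induction on the number of operators that the final position $\xi(i)$ and $a_i$ are separated only by positions occupied by crosses $j>i$ and their cap ends, together with symbols $<,>$. Equivalently, the half-open segment $[a_i,\xi(i))$ contains no $\circ$ of $g$ that is not the end of a cap of a later cross, and no cross with index $<i$. Next I would prove that the crosses with index $j>i$ lying in $[a_i,\xi(i))$ satisfy $a_j\le a_i$-type nesting. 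In particular, the caps lying between $a_i$ and $\xi(i)$ form a union of full caps nested to the left of cross $i$.

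Now suppose $b_i\le\xi(i)$ but $b_i>a_i$. I would use monotonicity of $b$: each $b_k$ with $k\ge i$ satisfies $b_k\ge b_i>a_i$. The crosses $j>i$ sitting in $[a_i,\xi(i))$ have indices larger than $i$, and there are at least as many of them as would be needed to force, via $b_j\le\xi(j)$ and the cap structure, a counting contradiction. Concretely, I would count the positions in $(a_i,\xi(i))$ not occupied by $<,>$. They come in pairs (cross, cap end), belonging to crosses $j>i$ whose final positions are $<\xi(i)$. Their number is then compared with the number of indices $k\ge i$ for which $b_k$ must fit in $(a_i,\xi(i)]$. The most convenient tool here is the reformulation used in Corollary~\ref{cor}, namely that $x<y$ iff $x-n(x)<y-n(y)$, so that all counts can be made in the shifted coordinates $x-n(x)$ where $<,>$ disappear. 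Alternatively, I would reduce to the case $r=1$ after peeling off the last operator as in the proof of Lemma~\ref{main2}, and use induction on $r$.

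The main obstacle is this converse direction. Precisely, the step is showing that $b_i\le\xi(i)$ for all $i$, combined with monotonicity, propagates back to $b_i\le a_i$. This requires a careful description of how the crosses with larger index interleave between $a_i$ and $\xi(i)$, and I expect the induction on $r$ (removing $P_{a_r}$, which shifts earlier crosses by exactly one cap) to be the cleanest way to control it.
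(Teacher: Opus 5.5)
Your forward direction is correct: $\xi(i)\ge a_i$ because a cross created at $a_i$ only ever moves right. The converse, which carries all the content, is not proved. The ``counting contradiction'' is never stated as an actual inequality, and the structural description it rests on is false. Take $a=(0,1,0)$ on a diagram with no $<,>$:
\begin{itemize}
\item $P_0$ puts cross $1$ at $0$;
\item $P_1$ puts cross $2$ at $1$;
\item the final $P_0$ moves the crosses at $1$ and $0$ to $3$ and $2$, and puts cross $3$ at $0$.
\end{itemize}
So $\xi=(2,3,0)$. For $i=2$ the segment $[a_2,\xi(2))=[1,3)$ contains $\xi(1)=2$, a cross of index $1<2$. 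The open segment $(1,3)=\{2\}$ is not made of (cross, cap end) pairs of later crosses, so the pairing and counting you propose cannot work as described. More fundamentally, you try to control the later crosses through $b_j\le\xi(j)$. But $\xi(j)$ gives no direct information about $a_j$, and $a_j$ is what determines whether cross $i$ gets moved.

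The missing idea is to choose $i$ \emph{maximal} with $b_i>a_i$. By maximality, $b_j\le a_j$ for all $j>i$. Combined with your own monotonicity remark, this gives $a_j\ge b_j>b_i>a_i$ for every $j>i$. Since $P_{a_j}$ only moves crosses at positions $\ge a_j$, no later operator ever touches cross $i$. Hence $\xi(i)=a_i<b_i$, contradicting $b_i\le\xi(i)$. This is the paper's argument.

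The induction on $r$ you mention can also be made to work, but only with an explicit step you did not supply. Let $\xi'$ be the numeration of $P_{a_{r-1}}\cdots P_{a_1}(g_{core})$ and take $k<r$:
\begin{itemize}
\item If $\xi'(k)<a_r$, then $P_{a_r}$ does not move cross $k$, so $\xi'(k)=\xi(k)\ge b_k$.
\item Otherwise $\xi'(k)\ge a_r=\xi(r)\ge b_r>b_k$.
\end{itemize}
Either way $(b_1,\dots,b_{r-1})\in Y(\xi')$, induction gives $b_k\le a_k$ for $k<r$, and $b_r\le\xi(r)=a_r$ closes the case $k=r$.
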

\begin{proof} Let $b\in Y(a)$ and  $b=(b_1\le \dots\le  b_r)$.  Then  we  have  $b_i\le a_i\le \xi(i)$. Therefore $b\in Y(\xi)$.

Now let us prove the opposite statement. Let $b\in Y(\xi)$ then  we need to  prove that $b_i\le a_i$. Suppose that there exists $i$ such that $b_i\le \xi(i),\,i=1,\dots,r$ and $b_i>a_i$ for some $i$. We can suppose that $i$ is the maximal with such a property. Note that $i\ne r$ since $a_r=\xi(r)$. Therefore  for any $j>i$ we have  $a_i<b_i<b_j\le a_j$. Therefore $a_i<a_j$ for any $j>i$  and by definition of $\xi$ we see that $\xi(i)=a_i<b_i$. So we came to contradiction. Therefore $b\in Y(a)$ and we proved the Lemma.
\end{proof}

So we see in order to find $|X(g,f)|$ we need to describe  the numerations $\xi$ which correspond  the relation $g=P_{a_r}\dots P_{a_1}(g_{core})$.

Let us recall that a  numeration $\xi$ of a  weight diagram $g$  is called admissible if  it preserves the partial order. In other words   if cap with initial point at $\xi(i)$ is located above the cap with  initial point  at $\xi(j)$  then $i<j$.

\begin{lemma}
  Conditions $g=P_{a_r}\dots P_{a_1}(f_{core})$ and $b\in Y(a)$ are equivalent to the conditions $\xi$ is admissible and $b\in Y(\xi).$ 
\end{lemma}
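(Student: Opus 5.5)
By the previous Lemma, once we have a presentation $g=P_{a_r}\dots P_{a_1}(g_{core})$ with associated numeration $\xi$, the condition $b\in Y(a)$ is equivalent to $b\in Y(\xi)$. So the statement reduces to a purely combinatorial claim about presentations of $g$. We must show that a numeration $\xi$ of $g$ arises from some presentation $g=P_{a_r}\dots P_{a_1}(g_{core})$ exactly when $\xi$ is admissible, and that the presentation is then uniquely determined by $\xi$. Granting this, the map $(a_1,\dots,a_r)\mapsto\xi$ is a bijection between presentations and admissible numerations, and $b\in Y(a)\iff b\in Y(\xi)$ completes the proof.

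\emph{Necessity.} I will argue by induction on $r$. Write $g'=P_{a_{r-1}}\dots P_{a_1}(g_{core})$, so $g=P_{a_r}(g')$. The cross added last is placed at $a_r=\xi(r)$. The crosses of $g'$ at positions $\ge a_r$ are each pushed to the second $\circ$ on the right, beginning with the rightmost. I will check directly from the two descriptions of caps (the ``first free $\circ$'' one and the counting one) that this operation has two effects. First, the crosses to the right of $a_r$ jump over exactly the new cap $C_{a_r}$. Second, the new cap $C_{\xi(r)}$ in $g$ contains no cap of another cross as a proper inner cap with larger label, that is, $\xi(r)$ is $\dashv$-maximal among the labels, so nothing lies strictly inside it with a label bigger than $r$. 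The nesting relations among the older crosses are unchanged by the shift, because each cap is translated together with its inner structure. This is exactly the mechanism used in the proof of Lemma \ref{main2}. Induction then gives that $\xi$ preserves $\dashv$.

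\emph{Sufficiency and uniqueness.} Given an admissible $\xi$, the label $r$ sits at a $\dashv$-maximal cross, i.e.\ one whose cap contains no other cap. I will remove it by the inverse of $P$: delete the cross at $\xi(r)$, and move every cross to its right back across the cap $C_{\xi(r)}$, i.e.\ to the second $\circ$ on the left. This produces a diagram $g'$ together with an induced numeration on $r-1$ crosses, and I will check that this numeration is again admissible. Then $P_{\xi(r)}(g')=g$, which forces $a_r=\xi(r)$. Since the remaining labels determine $a_1,\dots,a_{r-1}$ by induction, the presentation is unique.

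\emph{Main obstacle.} The delicate step is verifying that $P_a$ and its inverse interact with caps as claimed. For $P_a$, this means that moving each cross $\ge a$ to the second $\circ$ on the right yields precisely the old caps shifted, with the new minimal cap $C_a$ inserted and no other cap re-paired. For the inverse, it means that removing an innermost cap is always legal. I would prove this by the counting characterisation of caps: in every interval shifted past $C_a$, the numbers of $\times$'s and $\circ$'s each increase by one, so every old cap keeps its balanced-interval property.
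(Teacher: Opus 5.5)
Your proposal is correct and follows essentially the same route as the paper. Both arguments first use the previous lemma to reduce the statement to a bijection between presentations $g=P_{a_r}\dots P_{a_1}(g_{core})$ and admissible numerations. Necessity comes from the observation that the cap created by the last operator $P_{a_r}$ contains no other cap while older nestings are preserved. Sufficiency and uniqueness come by induction: strip the innermost cap $C_{\xi(r)}$ by the inverse move (crosses to the right of $\xi(r)$ sent to the second $\circ$ on the left) and check that $P_{\xi(r)}(g')=g$.

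One small inaccuracy: caps of crosses to the left of $a$ that enclose $a$ are stretched under $P_a$ rather than translated. Your balanced-interval counting is still the right way to verify that nesting is preserved in this case too.
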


\begin{proof}  We only need to prove that conditions  $g=P_{a_r}\dots P_{a_1}(f_{core})$ and $\xi$ is admissible are equivalent.  Let us prove  first that the  corresponding numeration  $\xi$ is admissible. The cross with number $i$ appears as  the result of $P_{a_i}\dots P_{a_1}(f_{core})$  and the corresponding cap  does not contained any previous caps. Therefore relation $j<i$ implies  that  the cap with the initial point at $\xi(i)$ does not contain  the cap with the initial point at $\xi(j)$. Therefore if $\xi(i)\dashv \xi(j)$ then $i<j$.

Now let $\xi$ be an admissible numeration of a weigh diagram $g$. Let us prove by induction that there exist the sequence $a_1,\dots,a_r$ 
such that  $g=P_{a_r}\dots P_{a_1}(g_{core})$  with the admissible numeration $\xi$. By definition of $P_{a_r}$ we have $a_r=\xi(r)$. Since $\xi$ is admissible we see that if  integer $c\ne a_r,\tilde a_r$  and  it is located   under the cap $C_{a_r}$ then $c\in g^{-1}(>,<)$.  Define the  weight diagram  $h$ by the following rule: 

$1)$ $h^{-1}(<)=g^{-1}(<),\,\,h^{-1}(>)=g^{-1}(>)$;

$2)$ if $b<a_r$ then $h^{-1}(b)=g^{-1}(b)$;

$3)$   replace $a_r$ by $\circ$. Set $M=\{b\in g^{-1}(\times)\mid b> a_r\}$. The corresponding new positions  of the elements of $M$ can be define by the following rule. Take the leftmost  element  $b$ from $M$ and   move it to  the second  $\circ $ on the left.  Then do  the same for the  leftmost element in   $M\setminus\{b\}$ and  so on. 

It is easy to verify  that $P_{a_r}(h)=g.$ Besides under inductive assumption we have $h=P_{a_{r-1}}\dots P_{a_1}(g_{core})$. Therefore 
$g=P_{a_r}\dots P_{a_1}(g_{core})$ and again by induction for given $\xi$ there is only one sequence $a=(a_1,\dots,a_r)$.
\end{proof}

\begin{corollary} Let $f\in P^{+}$ be a weight diagram with the standard numeration and $g$ be a diagram without numeration. Then $|X(g,f)|$ is equal to the number of pairs $(\xi,f)$ where $\xi$ is admissible numerations  of $g$  and   $f\in Y(\xi).$ We will denote the set of these pairs by $Z(g,f)$.
\end{corollary}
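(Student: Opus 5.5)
The plan is to chain the three preceding results into a sequence of bijections, so that the count $|X(g,f)|$ is transported step by step into a count of pairs $(\xi,b)$. Throughout we fix $f$ with its standard numeration and write $f=P_{b_r}\dots P_{b_1}(f_{core})$ with $f^{-1}(\times)=\{b_1<\dots<b_r\}$, as in the Remark after Definition \ref{create}.

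\emph{Step 1 (core condition).} First dispose of the trivial case. Every rising operator $R_i$ only moves a cross to the end of a cap, which is a $\circ$. Hence $R_1^{i_1}\dots R_r^{i_r}(f)$ always has the same sets $f^{-1}(>)$ and $f^{-1}(<)$ as $f$. So if $g^{-1}\{<,>\}\ne f^{-1}\{<,>\}$, then $X(g,f)=\emptyset$. On the other side no pair $(\xi,b)$ can be compatible either, so both sides are zero. We may therefore assume $g_{core}=f_{core}$. This is exactly the hypothesis of Corollary \ref{cor}.

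\emph{Step 2 (from exponents to creation sequences).} Apply Corollary \ref{cor}. It gives a bijection between $X(g,f)$ and the set of sequences $a=(a_1,\dots,a_r)$ such that $g=P_{a_r}\dots P_{a_1}(f_{core})$ and $b\in Y(a)$. The map is $i_p\mapsto a_p$, determined by
\[
a_p-n(a_p)=b_p-n(b_p)+i_p .
\]
This uses Lemma \ref{main2} together with the monotonicity of $x\mapsto x-n(x)$ on positions outside $f^{-1}\{<,>\}$.

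\emph{Step 3 (from creation sequences to numerations).} Each presentation $g=P_{a_r}\dots P_{a_1}(g_{core})$ determines a numeration $\xi$ of $g$. By the last Lemma, the pair of conditions ``$g=P_{a_r}\dots P_{a_1}(g_{core})$ and $b\in Y(a)$'' is equivalent to ``$\xi$ is admissible and $b\in Y(\xi)$''. The same Lemma shows that for each admissible $\xi$ there is exactly one sequence $a$ producing it. So $a\mapsto\xi$ is a bijection from the valid sequences $a$ onto the admissible numerations $\xi$ with $b\in Y(\xi)$. The preceding Lemma already shows that the $Y$-conditions match under this correspondence.

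Composing Step 2 and Step 3 yields a bijection between $X(g,f)$ and $Z(g,f)$, which proves the statement.

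The main obstacle is the injectivity of $a\mapsto\xi$. Distinct admissible sequences $a$ must yield distinct numerations. This depends on the inverse construction of $h$ from $g$ in the preceding Lemma, namely that $a_r=\xi(r)$ and that $h$ is uniquely determined. I would verify it by induction on $r$: peel off the cross $\xi(r)$ and check that $P_{a_r}(h)=g$ forces $h$. Everything else is a direct composition of the earlier statements.
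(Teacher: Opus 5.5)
Your Steps~2 and~3 are the paper's argument. The corollary is stated there without a separate proof, as the composition of Corollary~\ref{cor} (exponent vectors $\leftrightarrow$ creation sequences $a$ with $b\in Y(a)$) with the two lemmas that follow it ($b\in Y(a)\iff b\in Y(\xi)$, and admissibility of $\xi$). The injectivity of $a\mapsto\xi$ that you single out is exactly the closing clause ``for given $\xi$ there is only one sequence $a$'' of the admissibility lemma. The paper proves it by the same peeling induction you describe: set $a_r=\xi(r)$, then reconstruct $h$ with $P_{a_r}(h)=g$.

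Step~1, however, is wrong. It is not true that the count of pairs also vanishes when $g^{-1}\{<,>\}\neq f^{-1}\{<,>\}$. Admissibility of $\xi$ and the condition $b\in Y(\xi)$ involve only the integer positions of the crosses, not the symbols $<,>$. Take $f^{-1}(\times)=\{0\}$, $f^{-1}(>)=\{10\}$ and $g^{-1}(\times)=\{5\}$, $g^{-1}(>)=\{12\}$, with no $<$ in either. The unique admissible numeration of $g$ is $\xi(1)=5$, and $0\le 5$, so the right-hand side equals $1$. On the other hand $X(g,f)=\emptyset$, because the $R_i$ never move a $>$. So this case cannot be disposed of; the statement must be read with the hypothesis $f^{-1}\{<,>\}=g^{-1}\{<,>\}$ of Corollary~\ref{cor}, which the paper assumes tacitly. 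Under that hypothesis your Steps~2 and~3 give the result.
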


\begin{example}\label{adm0} 
 Let 
 $$
 f^{-1}(\times)=\{-1,1,3,5\}, \,f^{-1}\{<,>\}=\emptyset,\,g^{-1}\{<,>\}=\emptyset,\,\,g^{-1}(\times)=\{1,2,3,6\}.
 $$ 
 Then it is easy to see that there are  three  admissible numerations $\xi_1, \xi_2,\xi_3$ of $g$.

\begin{equation}\label{adm1}
\xymatrix{{1}\ar@/^5pc/@{}[rrrrrrr]_1&{2}\ar@/^2pc/@{}[rrr]_{2} &3\ar@/^1pc/@{}[r]_3&4 &5& 6\ar@/^1pc/@{}[r]_4&7&8}
\end{equation}
\vskip0.5cm
From the picture  above we see  that $\xi_1=(1,2,3,6)$. Therefore we have
$$
-1\le 1,\,\,\,1\le2,\, \,\,3\le 3,\,\,\,  5\le 6,\quad  f\in Y(\xi_1)
$$

\begin{equation}\label{adm2}
\xymatrix{{1}\ar@/^5pc/@{}[rrrrrrr]_1&{2}\ar@/^2pc/@{}[rrr]_{2} &3\ar@/^1pc/@{}[r]_4&4 &5& 6\ar@/^1pc/@{}[r]_3&7&8}
\end{equation}
\vskip0.5cm
From the picture  above we see  that $\xi_2=(1,2,6,3)$
$$
-1\le 1,\,\,\,1\le2,\, \,\,3\le 6,\,\,\,  5> 3,\quad  f\notin Y(\xi_2)
$$

\begin{equation}\label{adm3}
\xymatrix{{1}\ar@/^5pc/@{}[rrrrrrr]_1&{2}\ar@/^2pc/@{}[rrr]_{3} &3\ar@/^1pc/@{}[r]_4&4 &5& 6\ar@/^1pc/@{}[r]_2&7&8}
\end{equation}
\vskip0.5cm
From the picture  above we see  that $\xi_3=(1,6,2,3)$. Therefore we have
$$
-1\le 1,\,\,\,1\le6,\, \,\,3> 2,\,\,\,  5> 3,\quad  f\notin Y(\xi_3)
$$ 
Therefore  $|Z(f,g)|=1$.
\end{example}
\begin{definition} Let $f,g\in P^+$
and $g^{-1}(\times)=\{a_1<\dots<a_r\}$  Let us denote by $D(g,f)\subset \Bbb Z_{\ge0}^r$ the set of $x=(x_1,\dots,x_r)$ such that

$1)$ $x_i\in f^{-1}(\times),\,i=1,\dots,r $;

$2)$ if $a_i\dashv a_j$ then $x_i\le x_j$;

$3)$ $x_i\le a_i$

\end{definition} 
\begin{proposition}\label{biject} Consider the map $\Phi  :Z(g,f)\rightarrow \Bbb Z^r$ such that 
$$
\Phi(\xi)=(x_1,\dots,x_r),\quad x_i=\xi_{st}\xi^{-1}(a_i)
$$
 Then   $\Phi$ is a bijection   $Z(g,f)$ on to $D(g,f)$.  
\end{proposition}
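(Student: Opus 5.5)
We need to show that $\Phi$ is well defined with values in $D(g,f)$, that it is injective, and that every point of $D(g,f)$ is hit. Throughout, $Z(g,f)$ is identified with the set of admissible numerations $\xi$ of $g$ such that $f\in Y(\xi)$, i.e. $b_i\le \xi(i)$ for all $i$, where $b_1<\dots<b_r$ are the crosses of $f$. Here $\xi_{st}$ is the standard numeration of $f$, so $\xi_{st}(i)=b_i$, and therefore
$$
x_i=\xi_{st}\xi^{-1}(a_i)=b_{\xi^{-1}(a_i)} .
$$
In words: $x_i$ is the cross of $f$ whose label equals the label that $\xi$ puts on the cross $a_i$ of $g$. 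Put $\sigma=\xi^{-1}\circ a$ (a permutation of $[1,r]$), so that $x_i=b_{\sigma(i)}$.

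\emph{Step 1: $\Phi(\xi)\in D(g,f)$.} Condition 1) holds because each $x_i$ is some $b_j\in f^{-1}(\times)$. For condition 2), suppose $a_i\dashv a_j$. Admissibility of $\xi$ gives $\xi^{-1}(a_i)<\xi^{-1}(a_j)$. Since $\xi_{st}$ is increasing, $x_i<x_j$. For condition 3), put $k=\sigma(i)$. The condition $f\in Y(\xi)$ gives $b_k\le \xi(k)=a_i$, that is, $x_i\le a_i$.

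\emph{Step 2: injectivity.} The $b_j$ are distinct, so $x$ determines $\sigma$ through $x_i=b_{\sigma(i)}$. Then $\sigma$ determines $\xi$ through $\xi(\sigma(i))=a_i$.

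\emph{Step 3: surjectivity.} Let $x\in D(g,f)$. The main point is that the $x_i$ are pairwise distinct, since otherwise $x$ would not define a permutation. I will argue this from the definition of $D(g,f)$ and the cap structure; if it fails in general, the fix is to read condition 1) as requiring the $x_i$ to be distinct. Granting distinctness, each $x_i$ equals $b_{\sigma(i)}$ for a unique $\sigma(i)$, and $\sigma$ is a bijection. Define $\xi(\sigma(i))=a_i$. Then:
\begin{itemize}
\item $\xi$ is admissible: if $a_i\dashv a_j$ then $x_i\le x_j$, hence $x_i<x_j$ by distinctness, hence $\sigma(i)<\sigma(j)$.
\item $f\in Y(\xi)$: $b_{\sigma(i)}=x_i\le a_i=\xi(\sigma(i))$.
\end{itemize}
By construction $\Phi(\xi)=x$.

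\emph{Main obstacle.} The delicate point is the distinctness of the coordinates of $x$, equivalently the precise reading of condition 1) in the definition of $D(g,f)$. A lesser issue is that $f\in Y(\xi)$ compares $b_i$ with $\xi(i)$ label by label, and Steps 1 and 3 must apply this index by index. Beyond these, each step is a direct translation of the definitions.
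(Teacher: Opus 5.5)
Your argument is correct and is essentially the paper's proof. The paper also checks conditions 1)--3) for $\Phi(\xi)$ using admissibility and $b_k\le\xi(k)$, then inverts by setting $\xi(p)=a_i$ where $x_i=b_p$.

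On the point you flagged, distinctness cannot be derived from the literal conditions. In Example \ref{adm0} the constant tuple $x=(1,1,1,1)$ satisfies 1)--3). So your fallback reading, that $x$ is injective (as in the bijection requirement of $H(g,f)$), is necessary. It is exactly what the paper tacitly uses when it writes ``there is only one $i$ such that $x_i=f_p$'' and $f_p=x_i<x_j=f_q$.
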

\begin{proof}  Let us prove first that  $\Phi(\xi)\in D(g,f)$ for admissible $\xi$. It is easy to see the $\Phi(\xi)$ can be rewritten in the following form: if $\xi(p)=a_i$ then $x_i=f_p$. Therefore condition $x_i\in f^{-1}(\times)$ is clear. Suppose that $a_i\dashv a_j$  therefore 
$\xi(p)=a_i\dashv a_j=\xi(q)$. Therefore $p<q$  and we have $x_i=f_p\le f_q=x_j.$  We also have $x_i=f_p\le \xi(p))=a_i$.

Now let $x\in D(g,f)$. We are going to prove that there exists $\xi$ such that $f\in Y(\xi)$. 
 Let $1\le p\le r$ then there is only one $i$ such that $x_i=f_p$. Then we set $\xi(p)=a_i$. Let $a_i=\xi(p)\dashv\xi(q)=a_j$.  Then $f_p=x_i<x_j=f_q$. Therefore $p<q$.

\end{proof}
\begin{corollary}   Let $H(g,f)$ be the set of functions $\varphi: g^{-1}(\times)\rightarrow f^{-1}(\times)$ such that :

$1)$ $\varphi$ is a bijection;

$2)$ for any $a\in f^{-1}(\times)$ we have $\varphi(a)\le a$;

$3)$ if $a_1\dashv a_2$ then $\varphi(a_1)<\varphi(a_2).$ 

Then $H(g,f)$  is in bijection with $D(g,f)$ under correspondence  such that $\varphi(a_i)=x_i$.
\end{corollary}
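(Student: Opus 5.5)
The plan is to read the corollary as a reformulation of Proposition \ref{biject}, turning a vector $x=(x_1,\dots,x_r)$ into a function $\varphi$ on $g^{-1}(\times)=\{a_1<\dots<a_r\}$ by $\varphi(a_i)=x_i$. This assignment is tautologically a bijection between all maps $g^{-1}(\times)\to\Bbb Z$ and $\Bbb Z^r$. So it suffices to check that $x\in D(g,f)$ if and only if $\varphi$ satisfies $1)$--$3)$. I read condition $2)$ of $H(g,f)$ as $\varphi(a)\le a$ for $a\in g^{-1}(\times)$, which is what matches condition $3)$ of $D(g,f)$.

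First direction: take $\varphi\in H(g,f)$ and let $x_i=\varphi(a_i)$.
\begin{itemize}
\item Condition $1)$ of $D(g,f)$ holds because $\varphi$ takes values in $f^{-1}(\times)$.
\item Condition $3)$ of $D(g,f)$ is condition $2)$ of $H$.
\item Condition $2)$ of $D(g,f)$ follows from condition $3)$ of $H$, since $\varphi(a_i)<\varphi(a_j)$ implies $x_i\le x_j$.
\end{itemize}
So $x\in D(g,f)$.

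Second direction: take $x\in D(g,f)$. Conditions $1)$ and $2)$ of $H$ are immediate. The substantive points are that $\varphi$ is a bijection and that the inequalities in $3)$ are strict. Both follow from Proposition \ref{biject}.

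\begin{itemize}
\item Since $\Phi$ maps $Z(g,f)$ onto $D(g,f)$, there is an admissible $\xi$ with $x=\Phi(\xi)$.
\item As in the proof of the Proposition, if $\xi(p)=a_i$ then $x_i=f_p$, where $f^{-1}(\times)=\{f_1<\dots<f_r\}$ carries the standard numeration.
\item Because $\xi$ is a bijection $[1,r]\to g^{-1}(\times)$, the entries $x_1,\dots,x_r$ are exactly $f_1,\dots,f_r$, each occurring once. Hence $\varphi$ is a bijection $g^{-1}(\times)\to f^{-1}(\times)$. Here I use that $|g^{-1}(\times)|=|f^{-1}(\times)|=r$, which is implicit in the set-up $X(g,f)\ne\emptyset$.
\item Strictness then comes for free: if $a_i\dashv a_j$ with $i\ne j$, then $x_i\le x_j$ by condition $2)$ of $D(g,f)$, and injectivity gives $x_i\ne x_j$, so $\varphi(a_i)<\varphi(a_j)$.
\end{itemize}

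The main obstacle is the injectivity of $x$. The definition of $D(g,f)$ as written does not explicitly forbid repeated entries, so injectivity must be extracted from the surjectivity of $\Phi$ in Proposition \ref{biject}. That proposition's proof uses the phrase ``there is only one $i$ such that $x_i=f_p$''. I would make this explicit: either regard distinctness of the $x_i$ as part of the definition of $D(g,f)$ (the intended reading), or note that the construction of $\xi$ in the Proposition requires it. In either case the two correspondences are mutually inverse, because both are given by $\varphi(a_i)=x_i$.
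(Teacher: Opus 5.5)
Your proposal is correct under the reading you adopt, and it matches what the paper intends. The paper gives no separate proof: it treats the corollary as a direct transcription of Proposition \ref{biject} via $\varphi(a_i)=x_i$, and that proposition's own proof silently uses that the $x_i$ are distinct (``only one $i$ such that $x_i=f_p$'', and the strict inequality $x_i<x_j$).

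The one point to sharpen is that injectivity cannot really be \emph{extracted} from the surjectivity of $\Phi$. For the literal $D(g,f)$ that surjectivity is false: take $g^{-1}(\times)=\{1,3\}$, $f^{-1}(\times)=\{0,1\}$, no $<,>$, and $x=(0,0)$. So distinctness must be part of the definition of $D(g,f)$. Once it is, bijectivity (given $|f^{-1}(\times)|=r$) and the strict inequality in $3)$ follow directly, with no need to invoke $\Phi$ at all.
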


 Let $V$ be a vector space with the basis $K( f,\xi)$ where $(f,\xi)$ run over  all  numbered weight diagrams.  We also can define the action of rising operators on $V$ by the following formulae
 $$
R_i(K(f,\xi))= K(f(R_i(f,\xi)))
 $$
 Let also $U$ be a vector space with the basis $K(f)$ where $f$ run over all weight diagrams. We have a natural map $F: V\rightarrow U$  such that $F(K(f,\xi))=K(f)$.
 
 \begin{lemma}\label{prod}  Let $(g,\xi)$ be a numbered weight  diagram  with admissible  numeration $\xi$ and  $f=\{\xi(i_1)<\dots<\xi(i_r)\}$.  Then   the following equality holds true
$$
F((1+R_{r})\dots(1+R_{1})K(g,\xi))=F((1+R_{i_r})\dots(1+R_{i_1})K(g,\xi))
$$

\end{lemma}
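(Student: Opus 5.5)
The plan is to reduce the statement to a local commutation property of two consecutive rising operators, using the standard fact that any two linear extensions of a finite poset are connected by adjacent transpositions. The product $(1+R_{r})\dots(1+R_{1})$ only depends on $\xi$ through the order in which the crosses are moved. The map $k\mapsto i_k$ sends the standard numeration to $\xi$. Both the standard numeration and $\xi$ are admissible, i.e. linear extensions of the partial order $\dashv$ on $g^{-1}(\times)$. Hence we can pass from $\xi$ to the standard numeration by a chain of admissible numerations. In this chain, consecutive members differ by swapping the labels $j,j+1$ of two crosses $\xi(j),\xi(j+1)$ that are incomparable for $\dashv$, i.e. whose caps $C_{\xi(j)},C_{\xi(j+1)}$ are not nested, hence disjoint. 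So it suffices to prove the statement when $\xi'=\xi\circ(j\ j+1)$ with $\xi(j),\xi(j+1)$ incomparable.

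For this single swap, expand both products. The factors $(1+R_p)$ with $p\ne j,j+1$ act on crosses by their labels, so they are unaffected by relabeling. It is therefore enough to prove the following equalities of \emph{numbered} diagrams, up to the relabeling $(j\ j+1)$:
$$
R_j(g,\xi)\leftrightarrow R_{j+1}(g,\xi'),\qquad R_{j+1}(g,\xi)\leftrightarrow R_j(g,\xi'),\qquad R_{j+1}R_j(g,\xi)\leftrightarrow R_{j+1}R_j(g,\xi').
$$
The same must hold after first applying $R_1,\dots ,R_{j-1}$. The first two are tautological, since they move the same physical cross. The third says that moving the cross at $\xi(j)$ and then the cross at $\xi(j+1)$ gives the same result as moving them in the opposite order, given that their caps are disjoint. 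I would also check that, after the earlier operators $R_1,\dots,R_{j-1}$ have acted, the two crosses still have incomparable caps. Here admissibility is used: crosses with smaller labels lie outside or above, and moving them does not create nesting between the two.

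To prove the commutation I would use the creation operator description of Lemma \ref{main2}. Write the current diagram as $P_{a_r}\dots P_{a_1}(g_{core})$ with the numeration attached to the $P$'s as in the definition preceding the admissibility lemma. Then a power of $R_p$ just shifts the parameter $a_p$ so that $a_p-n(a_p)$ increases by one. Disjointness of the two caps means the two creation operators in positions $j,j+1$ can be interchanged: $P_{a}P_{b}(h)=P_{b}P_{a}(h)$ when the caps they create are disjoint, with $b<a$ placed to the left and no crosses created later inside. Moving either cross only shifts its own parameter, so the two orders of application produce the same multiset of parameters, hence the same diagram.

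I expect the main obstacle to be exactly this interchange $P_aP_b=P_bP_a$, together with its stability under the shifts. When the left cross is moved to the end of its cap, its new cap may extend and jump over the right cap. One has to verify from rule 3) of Definition \ref{create} that the result is the same as creating the crosses in the other order. I would first try a direct case check: either the moved left cap stays to the left of the right cap, or it jumps over it. In the second case, the "second $\circ$ on the right" rule shows that both orders give the identical configuration.
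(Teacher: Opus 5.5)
Your reduction is sound, and it differs from the paper's route (the paper splits $g$ into connected components and, for connected $g$, uses that the outermost cross has label $1$ and peels it off by induction). Admissible numerations are linear extensions of $\dashv$. Any two are joined by adjacent swaps of $\dashv$-incomparable crosses, whose caps are therefore disjoint. For one swap, everything reduces to showing that moving $x=\xi(j)$ and $y=\xi(j+1)$ in either order gives the same diagram.

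\textbf{The gap} is in your proof of this commutation: the creation-operator mechanism you propose does not work.
\begin{itemize}
\item Lemma~\ref{main2} only describes $R_1^{i_1}\cdots R_r^{i_r}$ applied to the \emph{standard} numeration, with $R_r^{i_r}$ acting first. It does not say that a single $R_p$ shifts only $a_p$ in an arbitrary presentation.
\item Inside $(1+R_r)\cdots(1+R_1)$ the current numeration can stop being admissible, so no presentation $P_{a_r}\cdots P_{a_1}(g_{core})$ exists at all. Take $g^{-1}(\times)=\{0,1\}$ with no $<,>$ and the standard numeration. Then $R_2R_1$ puts label $1$ at $3$ and label $2$ at $2$, and now $C_2\supset C_3$.
\item $P_aP_b\neq P_bP_a$ even when the created caps are disjoint, and the two presentations' parameters need not be permutations of each other. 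Crosses $\{0,2\}$ have disjoint caps $[0,1]$ and $[2,3]$. The standard numeration gives $P_2P_0(g_{core})$ and the swapped one gives $P_0P_0(g_{core})$, while $P_0P_2(g_{core})$ has crosses at $0$ and $4$.
\end{itemize}
So the step ``same multiset of parameters, hence same diagram'' fails on both counts. Your worry that the moved left cap may jump over the right one is also beside the point. Each $R_p$ occurs at most once in the product, so the cap of a cross after it has moved is never used.

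\textbf{The missing idea} is locality of caps. Since $\tilde a$ is the first $\circ$ to the right of $a$ with equally many $\times$'s and $\circ$'s strictly between, $C_a=[a,\tilde a]$ depends only on the diagram restricted to $[a,\tilde a]$. Caps are nested or disjoint. Hence a move $b\mapsto\tilde b$ with $C_b$ disjoint from $C_a$, or containing it, leaves $[a,\tilde a]$ untouched and so leaves $C_a$ unchanged. This gives your commutation at once, together with the persistence of disjointness after $R_1,\dots,R_{j-1}$.

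In fact locality proves the whole lemma directly.
\begin{itemize}
\item Induct along any admissible order. When $R_p$ acts, every cross already moved has a smaller label, so its original cap contains or misses $C_{\xi(p)}$, and crosses inside $C_{\xi(p)}$ have not moved.
\item Hence each cross moves to the end of its original cap, and $F((1+R_r)\cdots(1+R_1)K(g,\xi))=\sum_{S\subset g^{-1}(\times)}K(g_S)$, where $g_S$ moves each $a\in S$ to $\tilde a$.
\item This sum is visibly independent of the admissible $\xi$, and the positional order $i_1,\dots,i_r$ is admissible.
\end{itemize}
With this in hand, the transposition reduction becomes unnecessary.
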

\begin{proof}

Let  $(g_1,\xi_1),\dots,(g_s,\xi_s)$ be the connected  components of $g$.   It is easy to see that $R_{\alpha},\,R_{\beta}$ pairwise commute  if $\alpha,\beta$ belong to different  $g_i,g_j$. Therefore it is enough  to prove Lemma for the case when $g$ is connected. If it is so  then $\xi(i_1)\dashv \xi(i)$ for any $i\ne i_1$. By our assumption on $\xi$ we have $i_1<i$ for any $i\ne i_1$. Therefore $i_1=1$. Let $\tilde g=g\setminus\{a_1\}$ and $\tilde \xi$ is the restriction  $\{2,\dots,s\}$. By induction Lemma is true for $(\tilde g,\tilde\xi)$ and therefore  it is true for $(g,\xi).$
\end{proof}

\begin{corollary}  The following equality holds true
$$
chL(g)=\sum_{f}(-1)^{\theta(f)+\theta(g)}d_{g,f}chK(f),\,\,d_{g,f}=|D(g,f)|
$$
\end{corollary}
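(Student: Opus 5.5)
The plan is to start from Brundan's formula for the Kac-module multiplicities of an irreducible module, written in terms of rising operators. In the notation above it states that
$$
ch\,L(g)=\sum_{f}(-1)^{\theta(f)+\theta(g)}\,\Big(\sum_{(i_1,\dots,i_r)}1\Big)\,ch\,K(f),
$$
where the inner sum runs over the sequences with $f=R_1^{i_1}\dots R_r^{i_r}(g)$ computed with the standard numeration of $g$. Equivalently, up to the sign twist, $ch\,L(g)$ is the image under $F$ of $\prod_i(1+R_i+R_i^2+\dots)K(g,\xi_{st})$. I take this expansion as the input (\cite{B}, \cite{SZ}), with the sign $(-1)^{\theta(f)+\theta(g)}$ matching the normalization of Theorem \ref{KacEuler}. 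The first step is to fix the direction of the relation between $f$ and $g$ carefully: the coefficient of $ch\,K(f)$ is $|X(f,g)|$ read in the appropriate direction. In the statement the roles are arranged so that the cores agree, $f^{-1}\{<,>\}=g^{-1}\{<,>\}$; when the cores differ the coefficient is zero.

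The second step chains the combinatorial reductions already proved. Corollary \ref{cor} turns $|X|$ into the number of sequences $a$ with the target diagram equal to $P_{a_r}\dots P_{a_1}(\text{core})$ and $b\in Y(a)$. The following two lemmas replace this by pairs consisting of an admissible numeration $\xi$ and the condition $b\in Y(\xi)$, which gives the set $Z$. Proposition \ref{biject} then gives the bijection $Z(g,f)\cong D(g,f)$, and its corollary identifies both with the set $H(g,f)$ of order-compatible bijections between the crosses. Hence the coefficient equals $d_{g,f}=|D(g,f)|$.

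Lemma \ref{prod} is used to reconcile orders. Brundan's product of $(1+R_i)$-type factors is taken in the standard numeration, while the admissible numerations produced by the creation-operator description appear in other orders. Lemma \ref{prod} shows that after applying $F$ the order does not matter, so summing over admissible numerations counts each Kac term correctly. I will also check that each $(\xi,b)$ pair contributes exactly once, using the uniqueness statement at the end of the lemma on admissible numerations.

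I expect the main obstacle to be exactly this identification of Brundan's operator expansion, with its truncation and its specific ordering, with the count of pairs in $Z$. Concretely, one has to verify that iterating $R_r^{i_r}$ first and then the lower-indexed operators matches Lemma \ref{main2}'s parametrization by $a_p-n(a_p)=b_p-n(b_p)+i_p$, and that the constraint $i_p\ge0$ corresponds to $b_p\le a_p$. Once that dictionary is fixed, the corollary follows by composing the bijections.
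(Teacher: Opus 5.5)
There is a genuine gap, and it sits where you expected: the direction of the count. As stated, your input formula contradicts the corollary. Each $R_i$ moves a cross to the right end of its cap. So $f=R_1^{i_1}\cdots R_r^{i_r}(g)$ forces $S(f^{-1}(\times))\ge S(g^{-1}(\times))$, with equality only when $f=g$. An element of $D(g,f)$, by contrast, is a bijection with $x_i\le a_i$, which forces $S(f^{-1}(\times))\le S(g^{-1}(\times))$. Hence your displayed formula and the statement share only the term $f=g$.

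Your $|X(f,g)|$ is the same wrong count. Feeding it through Corollary~\ref{cor}, the two lemmas on admissible numerations and Proposition~\ref{biject} gives $|Z(f,g)|=|D(f,g)|$, not $|D(g,f)|$, so the switch to $Z(g,f)$ in your second paragraph is unjustified. Reversing the direction does not rescue the plan either. What the paper's lemmas count is $X(g,f)$: rising sequences issuing from the \emph{Kac} label $f$ with its standard numeration. A formula $ch\,L(g)=\sum_f\pm|X(g,f)|\,ch\,K(f)$ is not an input you can quote, since modulo those bijections it is the corollary itself. And if you quote an alternating formula in which operators act on $g$, its coefficients count sequences issuing from $g$, a different set that nothing in the paper relates to $X(g,f)$.

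The missing idea is a transposition by duality. The paper starts from the Kac flag of the projective cover, $P(g)=F((1+R_r)\cdots(1+R_1)K(g,\xi_{st}))$. It inverts this on numbered diagrams by the telescoping identity
$$
K(f,\xi_{st})=\sum_{i_1,\dots,i_r\ge0}(-1)^{i_1+\dots+i_r}(1+R_r)\cdots(1+R_1)R_1^{i_1}\cdots R_r^{i_r}K(f,\xi_{st}),
$$
so that the sequences now issue from $f$. The resulting $(g,\xi)=R_1^{i_1}\cdots R_r^{i_r}(f,\xi_{st})$ carry admissible but non-standard numerations. This is exactly where Lemma~\ref{prod} enters: it identifies $F((1+R_r)\cdots(1+R_1)K(g,\xi))$ with $P(g)$.

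Applying $F$ and counting gives $K(f)=\sum_g(-1)^{\theta(f)+\theta(g)}|X(g,f)|\,P(g)$, with $|X(g,f)|=|D(g,f)|$. Since $\{L(g)\},\{K(f)\}$ are dual to $\{P(g)\},\{K(f)\}$ under the canonical form, the coefficient of $P(g)$ in $K(f)$ equals the coefficient of $K(f)$ in $L(g)$. That duality step is what exchanges the roles of $f$ and $g$, and your plan has no counterpart to it. Correspondingly, Lemma~\ref{prod} has nothing to do in your version: a geometric series $\prod_i(1+R_i+R_i^2+\cdots)$ contains no product of factors $(1+R_i)$ whose order could matter.
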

\begin{proof}  We have the following equality 
$$
K(f,\xi_{st})=\sum_{i_1.\dots,i_r}(-1)^{i_1+\dots+i_r}(1+R_r)\dots (1+R_1)R_1^{i_1}\dots R_r^{i_r}K(f,\xi_{st})
$$
$$
=\sum_{(g,\xi)=R_1^{i_1}\dots R_r^{i_r}(f,\xi_{st})}(-1)^{i_1+\dots+i_r}(1+R_r)\dots (1+R_1)K(g,\xi)
$$
and it is not difficult to check that $i_1+\dots+i_r\equiv\theta(f)+\theta(g) \mod 2$ if $(g,\xi)=R_1^{i_1}\dots R_r^{i_r}(f,\xi_{st}).$

Let us apply to the both side of the equality the operator $F$. Therefore  by Lemma  \ref{prod}  and Theorem  \ref{biject} we have 
$$
K(f)=\sum_{g}(-1)^{\theta(f)+\theta(g)}d_{gf}P(g)
$$
Since families  $\{L(g)\},\,\{K(f)\}$ are dual to the families $\{P(g)\},\,\{K(f)\}$ with respect to  the canonical bilinear form  then we have 
$$
chL(g)=\sum_{g}(-1)^{\theta(f)+\theta(g)}d_{g,f}chK(f)
$$
\end{proof}
 
 \section{Irreducible modules and Euler characters}
 The following formula gives the decomposition of Euler character in terms of Kac modules. 
 \begin{corollary}\label{EulerKac} Let $p$ be a weight diagram. Then the following equality holds true
 $$
 E(p)=\sum_{f=p*C}(-1)^{\theta(p)+\theta(f)}K(f),\quad C\subset\Bbb Z_{\le -d(p)}
 $$
 \end{corollary}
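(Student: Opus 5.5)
The plan is to derive this from Theorem \ref{KacEuler} by duality, as in the Su--Zhang corollary above. By Theorem \ref{basis} the characters $ch\,E(p)$, $p\in\mathcal E(m,n)$, form a $\Bbb Z$-basis of $\Lambda^{\pm}_{m,n}$. The Kac characters $ch\,K(f)$ with $f\in\mathcal P(m,n)$ are exactly the Euler characters with $m(f)=m$, $n(f)=n$, so they are part of this basis. Under the canonical bilinear form, the family $\{ch\,K(f)\}$ pairs with the projective characters $\{ch\,P(f)\}$ (as already used in the proof of the Su--Zhang formula). However, $\{ch\,K(f)\}$ alone does not span. So I will not expand $E(p)$ directly through a dual basis. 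Instead I will expand $ch\,E(p)$ in the Kac characters, which is legitimate: every Euler character is a finite $\Bbb Z$-combination of Kac characters. This follows from Serganova's formula, since summing $\prod_{\alpha\in R_{\frak p}\cap R_1^+}(1-e^{-\alpha})^{-1}$ against the Weyl symmetrization telescopes into Kac-type terms. Alternatively, I would simply note that the Kac characters span the subring of characters of modules with a Kac flag, and that $E(p)$ admits a finite alternating complex of Kac modules.

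Granting a finite expansion $ch\,E(p)=\sum_f c_{p,f}\,ch\,K(f)$, the coefficients are obtained by pairing both sides with $ch\,P(f')$:
$$
c_{p,f'}=(ch\,E(p),ch\,P(f')).
$$
Next I express $ch\,P(f')$ in Kac characters via BGG reciprocity, $ch\,P(f')=\sum_{g}[K(g):L(f')]\,ch\,K(g)$. I then use that the form $(K(g),K(f))$ equals $\delta_{gf}$ up to the same normalisation under which Theorem \ref{KacEuler} is stated. This route, however, is circular. The cleaner reading is that Theorem \ref{KacEuler} is already stated with the form $(\cdot,\cdot)$ for which the Kac characters are orthonormal in the relevant sense, namely $(ch\,K(f),ch\,K(f'))=\delta_{f,f'}$. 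In that normalisation, pairing the expansion with $ch\,K(f)$ gives directly
$$
c_{p,f}=(ch\,E(p),ch\,K(f))=(-1)^{\theta(f)+\theta(p)}
$$
when $f=p*C$ with $C\subset p^{-1}(\circ)\cap\Bbb Z_{\le -d(p)}$, and $0$ otherwise. This is exactly the claimed formula. The form is symmetric, so the order of the arguments does not matter.

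The main obstacle is justifying that the expansion of $E(p)$ involves only Kac characters, with no other component orthogonal to all $K(f)$, together with the orthonormality of Kac characters. For the first point I would use the explicit formula for $ch\,E(\chi_{r,s})\Delta(x)\Delta(y)$. In it, the factor $\prod_{(i,j)\in D_-}(1+x_i/y_j)$ is expanded as a sum over subsets. Each monomial, after antisymmetrisation, becomes a Kac numerator $\prod_{i,j}(1+y_j/x_i)\,x^{\lambda'+\rho_m}y^{\mu'+\rho_n}$, because the missing factors for $r<i$ can be inserted by rewriting $1+x_i/y_j=(x_i/y_j)(1+y_j/x_i)$. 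This shows that $E(p)$ is a finite signed sum of Kac characters, so the pairing argument above determines the coefficients. The index set $\Bbb Z_{\le -d(p)}$ and the signs are then read off from Theorem \ref{KacEuler} rather than recomputed.
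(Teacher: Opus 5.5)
Your core computation matches the paper's argument: pair $ch\,E(p)$ with each $ch\,K(f)$, use that Kac characters are orthonormal for the canonical form, and read off the coefficient from Theorem \ref{KacEuler}. The paper phrases this as comparing $(K(f),E(p))$ with $(K(f),N)$, where $N$ is the right-hand side.

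\textbf{The gap is your existence step.} You claim that $ch\,E(p)$ is a \emph{finite} $\Bbb Z$-combination of Kac characters, and this is false whenever $m(p)<m$.
\begin{itemize}
\item The statement itself shows this. Here $f=p*C$ must have $m(f)=m$, so $|C|=m-m(p)\ge 1$. Since $C$ runs over subsets of the infinite set $p^{-1}(\circ)\cap\Bbb Z_{\le -d(p)}$, the right-hand side has infinitely many nonzero terms. A finite expansion would make only finitely many pairings $(ch\,E(p),ch\,K(f))$ nonzero, contradicting Theorem \ref{KacEuler}.
\item It also contradicts Theorem \ref{basis}, which you invoke. The Kac characters are among the linearly independent $ch\,E(q)$, so a non-Kac $ch\,E(p)$ cannot be a finite combination of them.
\item A concrete case: for $\frak{gl}(1,1)$ and $(r,s)=(0,0)$ one has $ch\,E(p)=1$. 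Every Kac character $x^ay^b(1+y/x)=x^{a-1}y^b(x+y)$ is divisible by $x+y$, so $1$ is not a finite combination of them. The true identity is the infinite sum $1=\sum_{k\ge0}(-1)^kx^{-k}y^k(1+y/x)$.
\end{itemize}
In your explicit-formula argument, the failure is at the $(m-r)(n-s)=(m-r)^2\ge1$ factors $(1+y_j/x_i)$ with $r<i\le m$, $s<j\le n$. These factors do not occur in the Euler numerator at all; your rewriting $1+x_i/y_j=(x_i/y_j)(1+y_j/x_i)$ only covers $j\le s$. They can be inserted only by dividing, which produces the geometric series $\sum_k(-y_j/x_i)^k$. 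Your alternative claim of a finite complex of Kac modules fails for the same reason.

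\textbf{How to repair it.} Work in the completion where the $y_j/x_i$ are small.
\begin{itemize}
\item There $ch\,E(p)/\prod_{i,j}(1+y_j/x_i)$ is an $S_m\times S_n$-invariant formal series.
\item Each bidegree component of this series is a symmetric Laurent polynomial, hence a finite combination of products $s_\lambda(x)s_\mu(y)$.
\item So $ch\,E(p)$ is an infinite but well-defined sum of Kac characters, with coefficients given by the pairings.
\end{itemize}
You need the canonical form on this completion, where Kac characters are orthonormal and determine an element by its pairings with them. This is the setting of \cite{Ser3} that the paper implicitly relies on when it says it suffices to check $(K(f),E(p))=(K(f),N)$ for all $f$. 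With that in place, your pairing computation gives exactly the stated formula. The detour through projective covers and BGG reciprocity is unnecessary.
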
  
 
 \begin{proof} Let us  denote  by $N$ the right hand side of the above equality and let $K(f)$ be a Kac module. It is enough to prove that $(K(f),E(p))=(K(f),N)$ for any $f$. If $f$ does not has the form $f=p*C$ for any $C\supset \Bbb Z_{\le -d(p)}$ then we have  by  Theorem \ref{KacEuler}  $(K(f),E(p))=0=(K(f),N)$ since Kac modules pairwise orthogonal. If $f=p*C$  for some $C\supset \Bbb Z_{\le -d(p)}$ then again by the same Theorem we have $(K(f),E(p))=(-1)^{\theta(f)+\theta(p)}=(K(f),N)$.
 \end{proof}

 Let $p,g\in \mathcal E(m.n)$ and $m(g)=m,\,n(g)=n$. Suppose that for some $C\subset \Bbb Z_{\le -d(g)}$ we have  $d_{g,p*C}\ne0$.  Then it is easy to see that  if $\max C<\min g^{-1}(\times)$ then $d_{g,p*C}$ does not depend on $C$. In such a case  we will  set $d^{\,\infty}_{g,p}=d_{g,p*C}$.

\begin{thm}\label{Euler} Let 
$$
L(g)=\sum_{p}(-1)^{\theta(f)+\theta(p)}e_{g,p}E(p)
$$
be the decomposition  irreducible module $L(g)$ into  Euler characters. Then the following equality holds true
$$
e_{g,p}=\sum_{q*Z=p}(-1)^{|Z|}d^{\,\infty}_{g,q},\quad Z\subset \Bbb Z_{\le -d(g)}
$$

\end{thm}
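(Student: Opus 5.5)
The approach is to invert the unitriangular relation of Corollary~\ref{EulerKac}, which expresses Euler characters through Kac characters, and substitute it into the Su--Zhang type formula $ch\,L(g)=\sum_f(-1)^{\theta(f)+\theta(g)}d_{g,f}\,ch\,K(f)$ from the previous section. Corollary~\ref{EulerKac} says
$$E(p)=\sum_{C}(-1)^{\theta(p)+\theta(p*C)}K(p*C),\qquad C\subset p^{-1}(\circ)\cap\Bbb Z_{\le -d(p)}.$$
Because adding crosses preserves $d$, every $p*C$ has $d(p*C)=d(p)$, so the index set $\Bbb Z_{\le -d}$ is the same for all diagrams involved. The relation is therefore a ``Boolean lattice'' transform, and M\"obius inversion on subsets gives
$$K(f)=\sum_{Z}(-1)^{|Z|}(-1)^{\theta(f)+\theta(f*Z)}E(f*Z),\qquad Z\subset f^{-1}(\circ)\cap\Bbb Z_{\le -d(f)}.$$
To check this, substitute it back into Corollary~\ref{EulerKac}. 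The coefficient of $K(p*W)$ becomes $\sum_{C\sqcup Z=W}(-1)^{|Z|}$, and this equals $\delta_{W,\emptyset}$. The signs $(-1)^{\theta}$ telescope.

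For the substitution, one subtlety must be handled first. $K(f)$ with $m(f)=m$, $n(f)=n$ is itself an Euler character, but the diagrams $f*Z$ have more crosses and so lie outside $\mathcal E(m,n)$. I would interpret these through the stability of Corollary~\ref{stab}, or equivalently reverse the roles and remove crosses.

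The cleaner route is to write $E(p)$ for $p\in\mathcal E(m,n)$ and solve for the coefficients $e_{g,p}$ by pairing. From Theorem~\ref{KacEuler}, $(K(f),E(p))=\pm1$ exactly when $f=p*C$. Pairing $ch\,L(g)=\sum_p\pm e_{g,p}E(p)$ against the dual basis, or simply expanding both sides in Kac characters, gives the triangular system
$$d_{g,f}=\sum_{p,\;C:\;p*C=f}e_{g,p}$$
after the signs cancel. Here $f$ ranges over Kac diagrams of $\mathcal P(m,n)$ and $p$ over $\mathcal E(m,n)$.

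The main step is to solve this system. Fix $p$ and consider the family $f=p*C$ in which $C$ runs over large subsets sitting to the left of $g^{-1}(\times)$. By the definition preceding the theorem, $d_{g,p*C}=d^{\,\infty}_{g,p}$ once $\max C<\min g^{-1}(\times)$. So the system becomes a subset-sum relation $d^\infty_{g,q}=\sum_{p:\,p*Y=q}e_{g,p}$, taken over the diagrams $q$ obtained from $p$ by adding crosses in $\Bbb Z_{\le -d(g)}$. M\"obius inversion on the Boolean lattice of added crosses then yields $e_{g,p}=\sum_{q*Z=p}(-1)^{|Z|}d^\infty_{g,q}$, which is the claimed formula.

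The hardest point is justifying the passage from $d$ to $d^\infty$. One must show that only the ``far-left'' completions contribute, so that the cross positions in the finite window are captured by $q$ and the remaining ones are pushed to $-\infty$. This requires a finiteness and limit argument in the rank $m$, which I would carry out via the stability homomorphism $\varphi$ of Corollary~\ref{stab}. Under $\varphi$, Euler characters are preserved, so the $e_{g,p}$ are independent of $m$ and can be computed for $m\gg0$, where all extra crosses lie arbitrarily far to the left. The remaining task is to carry out that independence check carefully.
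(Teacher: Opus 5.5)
Your outline is the paper's proof: expand $ch\,L(g)=\sum_p\pm e_{g,p}E(p)$ in Kac characters by Corollary~\ref{EulerKac}, compare with the Su--Zhang coefficients to get $d_{g,f}=\sum_{p*C=f}e_{g,p}$, specialize to $f=q*X$ with $X$ far to the left, and invert on a Boolean lattice. (At that point you should fix $q$, not $p$. Your first-paragraph inversion should be discarded: in Corollary~\ref{EulerKac} the size $|C|=m-m(p)$ is forced, so it is not a full subset transform inside $\Lambda^{\pm}_{m,n}$, and $E(f*Z)$ is not defined there.)

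The gap is that the only substantive step is exactly the one you leave open: rewriting $\sum_{p*C=q*X}e_{g,p}$ as $\sum_{p*Z=q}e_{g,p}$. Your proposed tool would not close it. Requiring $\max X<\min g^{-1}(\times)$ gives $d_{g,q*X}=d^{\,\infty}_{g,q}$. It does not control the right-hand side, where a priori some $p$ with $e_{g,p}\neq0$ could have crosses inside or to the left of $X$, so that $p^{-1}(\times)\not\subset q^{-1}(\times)$. Passing to $m\gg0$ via $\varphi$ does not help. The map $\varphi$ lowers the rank and kills Kac characters, and it does not send $L(g)$ to a single irreducible of smaller rank; the paper's final theorem gives a signed sum of $L(g_i)$ over connected components. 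So stability of Euler characters gives no rank-independence of the $e_{g,p}$. Moreover, the statement is at fixed $(m,n)$, where there is no limit to take.

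The missing idea is just finiteness. By Theorem~\ref{basis}, $ch\,L(g)$ is a finite combination of Euler characters, so only finitely many $p$ have $e_{g,p}\neq0$. Given $q\in\mathcal E(m,n)$, choose $X\subset q^{-1}(\circ)\cap\mathbb{Z}_{\le -d(g)}$ with $|X|=m-m(q)$, lying to the left of $g^{-1}(\times)$ \emph{and} of $p^{-1}(\times)$ for every such $p$. Suppose $p*C=q*X$ with $e_{g,p}\neq0$. Then $p^{-1}(\times)\cap X=\emptyset$, hence $X\subset C$ and $p^{-1}(\times)\subset q^{-1}(\times)$. Therefore $q=p*(C\setminus X)$ with $C\setminus X\subset\mathbb{Z}_{\le -d(g)}$. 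Conversely, each $p*Z=q$ gives $p*(Z\cup X)=q*X$. So $d^{\,\infty}_{g,q}=\sum_{p*Z=q}e_{g,p}$ holds for every $q\in\mathcal E(m,n)$. Since $\{q: q*Z=p\}$ is a finite Boolean lattice inside $\mathcal E(m,n)$, M\"obius inversion yields $e_{g,p}=\sum_{q*Z=p}(-1)^{|Z|}d^{\,\infty}_{g,q}$. This is how the paper closes the argument, in one line and with no appeal to Corollary~\ref{stab}.
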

\begin{proof}
From Theorem \ref{basis} it follows that any irreducible module can be represented as the finite sum of Euler characters. 
$$
L(g)=\sum_{p}(-1)^{\theta(f)+\theta(p)}e_{g,p}E(p)
$$
 Let us decompose  every Euler character in the above decomposition as the sum Kac modules according to Corollary \ref{EulerKac}
$$
E(p)=\sum_{f=p*C}(-1)^{\theta(p)+\theta(f)}K(f),\quad C\subset\Bbb Z_{\le -d(f)}
$$
Therefore we have 
$$
L(g)=\sum_{p}(-1)^{\theta(g)+\theta(p)}e_{g,p}\sum_{f}(-1)^{\theta(p)+\theta(f)}K(f)=
$$
$$
=\sum_{f}(-1)^{\theta(g)+\theta(f)}\left(\sum_{p}e_{g,p}\right)K(f)
$$
Therefore  we have 
$$
d_{g,f}=\sum_{p*C=f}e_{g,p}.
$$
Let  us take $f=q*X$. If we take $X\subset \Bbb Z_{\le -d(g)}$ such that $x<y$ for any $x\in X$ and any $y\in  g^{-1}(\times)\bigcup_p p^{-1}(\times)$. Then $q=p*Z$ for some $Z\subset \Bbb Z_{\le -d(g)}$.  Therefore we have 
$$
d^{\,\infty}_{g,q}=\sum_{p*Z=q}e_{g,p}.
$$
If we apply the inversion formula then we get
$$
e_{g,p}=\sum_{q*Z=p}(-1)^{|Z|}d^{\,\infty}_{g,q}
$$
\end{proof}
\begin{proposition}\label{irreducible} $E(p)$ is irreducible  if and only if $p^{-1}(\times)$ is empty. 
\end{proposition}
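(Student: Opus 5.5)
We have to show that $E(p)$ is irreducible exactly when $p$ has no crosses. The plan is to compare the Kac-module decomposition of $E(p)$ with that of irreducible characters, using that Kac characters are linearly independent.

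\emph{If $p^{-1}(\times)=\emptyset$.} By Corollary \ref{EulerKac},
$$E(p)=\sum_{C}(-1)^{\theta(p)+\theta(p*C)}K(p*C),\qquad C\subset p^{-1}(\circ)\cap\Bbb Z_{\le -d(p)}.$$
Any $f=p*C$ occurring here satisfies $m(f)=m$ and $n(f)=n$. Since $p$ has no crosses, $m(p)+|C|=m$ and $n(p)+|C|=n$, so $|C|=m-m(p)$ is forced. We then compare this with the formula of the Corollary in \S4, $ch\,L(g)=\sum_f(-1)^{\theta(f)+\theta(g)}d_{g,f}\,ch\,K(f)$, where $d_{g,f}=|D(g,f)|$ counts order-respecting bijections $\varphi:g^{-1}(\times)\to f^{-1}(\times)$ with $\varphi(a)\le a$. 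We choose $g=p*C_0$, where $C_0$ consists of the $|C|$ largest available positions in $p^{-1}(\circ)\cap\Bbb Z_{\le -d(p)}$. We then check two things. First, $d_{g,f}\ne0$ exactly for $f=p*C$ with $C$ an admissible subset of the same size. Second, $d_{g,f}=1$ for each such $f$.

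The main obstacle is the second check, together with verifying that all caps of $g$ end to the right of $-d(p)$ in a way that matches the constraint. The intended argument is that the positions available to the crosses of $f$ are exactly those $\le -d(p)$, and that the cap nesting of $g$ is as rigid as the ordering of $C$. This gives a unique $\varphi$, namely the order-preserving bijection. The signs then agree automatically from the two formulas, so $E(p)=L(g)$ is irreducible. A cleaner alternative would be to identify $E(p)$ directly as the Borel–Weil–Bott module induced from a one-dimensional $\frak p$-module whose weight is typical for the Levi part $\frak{gl}(m-r,n-s)$. This is essentially the same computation.

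\emph{If $p^{-1}(\times)\neq\emptyset$.} We use Corollary \ref{stab} and restrict by $\varphi$ repeatedly $k$ times, where $k=\min(m(p),n(p))$. This lands in $\frak{gl}(m-k,n-k)$. The image of $E(p)$ is again an Euler character there, and it is nonzero because the pair $(\lambda,\mu)$ has a shorter side of length $\le m-k$. On the other hand, under the Duflo–Serganova map an irreducible $L(g)$ has zero image once the rank drops below its degree of atypicality. We compute $p$'s atypicality from its crosses and compare; the crosses contribute atypicality beyond what survives the restriction, so $E(p)$ cannot equal a single $\pm ch\,L(g)$.

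If that comparison is awkward, we use the simpler route via Corollary \ref{EulerKac} again. A cross of $p$ lying in $p^{-1}(\times)$ with no available $\circ$ to pair with produces at least two Kac terms whose diagrams differ by moving crosses to positions not reachable by the rising order $\varphi(a)\le a$ from any single $g$. This contradicts the shape of $d_{g,f}$ and shows that no irreducible $L(g)$ can equal $E(p)$.
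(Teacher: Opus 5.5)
Your ``if'' direction is the paper's argument: take $g=p*C_0$, the top Kac term of $E(p)$, and compare with the Su--Zhang formula. The step you leave as an ``intended argument'' is short. Since $p$ has no crosses and no $\circ$ lies between consecutive points of $C_0$, the caps of $g$ are totally nested, so $\dashv$ is the natural order on $g^{-1}(\times)$. Hence the only candidate in $D(g,f)$ is the order-preserving bijection, and it satisfies $\varphi(a)\le a$ exactly when $f^{-1}(\times)\subset\Bbb Z_{\le -d(p)}$. (Your aside about a ``typical'' Levi weight is off: $\chi_{r,s}$ is the trivial weight on $\frak{gl}(m-r,n-s)$.)

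The ``only if'' direction has a genuine gap: your Duflo--Serganova comparison runs the wrong way.
\begin{itemize}
\item By Corollary \ref{stab}, $\varphi^j(E(p))\neq0$ if and only if $j\le m-m(p)$. So with $k=\min(m(p),n(p))$ the image need not be nonzero: if $p$ is a Kac diagram with a cross, then $m(p)=m$ and already $\varphi(E(p))=0$.
\item If $E(p)=\pm L(g)$, then $d_{g,g}=1$ and linear independence of Kac characters force $g=p*C$ with $|C|=m-m(p)$. So $g$ has atypicality $|p^{-1}(\times)|+m-m(p)\ge m-m(p)$. It is therefore $L(g)$ that survives \emph{more} applications of $\varphi$, never fewer, so no contradiction of the kind you describe can arise.
\end{itemize}
The argument can be repaired by reversing it. With $a=|g^{-1}(\times)|$ one has $\varphi^{a}(L(g))=\pm d^{\,\infty}_{g,g_{core}}E(g_{core})\neq0$, as in the superdimension proposition or by \cite{HW}. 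On the other hand, $\varphi^{a}(E(p))=0$ as soon as $p^{-1}(\times)\neq\emptyset$.

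Your fallback is also backwards: what is needed is a Kac term of $L(g)$ absent from $E(p)$, not the converse. Also, ``a cross with no available $\circ$'' has no meaning, since every cross has a cap. The paper's route is more direct:
\begin{itemize}
\item Every Kac term $K(p*C)$ of $E(p)$ keeps all crosses of $p$.
\item $L(g)$ contains $K(f)$ with $d_{g,f}\ge1$ for any $f$ with the same core and number of crosses whose crosses all lie left of $g^{-1}(\times)$. The order-preserving bijection works because $a\dashv b$ implies $a<b$.
\item Placing those crosses left of $\min p^{-1}(\times)$ gives a term of $L(g)$ absent from $E(p)$.
\end{itemize}
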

\begin{proof} Let $E(p)$ be irreducible and  
$$
E(p)=\sum_{C\subset \Bbb Z_{\le d}}(-1)^{\theta(p)+\theta(p*C)}K(p*C)
$$
be its  decomposition into Kac modules. All sets $C$ have the same number  of elements  $r$. Let us  represent any $C$ as $C=\{c_1>\dots>c_r\} $. Let $c^0_1$ be the maximal element from all $\{c_1\}$ and the same for $c_2^0,\dots, c_r^0$. Let $g=p*\{c^0_1,\dots,c^0_r\}.$  So we see that the highest term of  $E(p) $ is $K(g)$. Therefore  decompositions of  $E(p)$ and $ L(g)$  coincide up to sign. But in such a case  for all $f$  in the decomposition of $L(g)$ we have $f^{-1}(\times)\supset p^{-1}(\times)$. This is only possible when $p^{-1}(\times)=\emptyset$.

Let us prove the opposite statement.  Let $p^{-1}(\times)=\emptyset$ and $g$ be as above. Therefore the order $\dashv$ on $g^{-1}(\times)$ is the same as natural order. Therefore  all multiplicities  in the decomposition of $L(g)$ are equal to one. Because of $p^{-1}(\times)=\emptyset$  we see that  the decomposition $L(g)$ in terms of Kac modules coincides  with  the decomposition $E(p)$. Therefore $E(p)$ is irreducible.
\end{proof}
\begin{proposition} Let 
$$
L(g)=\sum_{p}(-1)^{\theta(g)+\theta(p)}e_{g,p}E(p)
$$ 
be the decomposition of irreducible module in terms of Euler modules.  Let $q=g_{core}$   Then the following equalities hold true
$$
sdim\, L(g)=(-1)^{\theta(g)+\theta(q)}e_{g,q}\,sdim\, E(q)
$$
$$
=(-1)^{\theta(g)+\theta(q)}d^{\infty}_{g,q}\,sdim\, L(q)=
$$

\end{proposition}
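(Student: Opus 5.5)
We take super-dimensions of both sides of the decomposition
$$
L(g)=\sum_{p}(-1)^{\theta(g)+\theta(p)}e_{g,p}E(p).
$$
The plan is to show that only the term with $p=q=g_{core}$ survives, and then to identify $e_{g,q}$ and $sdim\,E(q)$.

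The tool is Corollary \ref{stab}. Superdimension is the value of the character at $x_i=1$, $y_j=-1$ (with our parity conventions, up to an overall sign convention which we fix once). Under the evaluation $x_m=-y_n$ this factors through $\varphi:\Lambda^{\pm}_{m,n}\to\Lambda^{\pm}_{m-1,n-1}$. Iterating $\varphi$ $k=\min(m,n)$ times, we reach $\frak{gl}(m-k,n-k)$, where one of the two sets of variables is empty and the evaluation is an ordinary dimension count. By part 1) of Corollary \ref{stab}, $\varphi$ sends $E_{\lambda,\mu}$ to the Euler character with the same label. By part 2), $\varphi$ kills $E(p)$ when the defining pair $(r,s)$ equals the full size $(m,n)$, i.e. when $p$ is a Kac-type label. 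Hence $\varphi^{k}(E(p))\neq 0$ only if $m(p)\le m-k$ and $n(p)\le n-k$. Since $d(p)=m-n$, this means $|p^{-1}(\times)|=0$, and $p^{-1}\{<,>\}$ then has the minimal possible size $|m-n|$.

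Next we identify which such $p$ occur. Every $p$ appearing with $e_{g,p}\neq 0$ satisfies $p=q'*Z$ with $d^{\infty}_{g,q'}\neq0$, by Theorem \ref{Euler}. Moreover $d_{g,f}\ne0$ forces $f^{-1}\{<,>\}=g^{-1}\{<,>\}$, because the rising operators only move crosses. Hence a crossless $p$ in the sum must equal $g_{core}=q$. This gives the first equality
$$
sdim\,L(g)=(-1)^{\theta(g)+\theta(q)}e_{g,q}\,sdim\,E(q).
$$

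For the second equality we do two things.

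First, we note that $E(q)$ is irreducible by Proposition \ref{irreducible}, since $q^{-1}(\times)=\emptyset$. Its label is $q$ itself, padded by crosses placed at $-\infty$ as in the proof of that proposition. So $sdim\,E(q)=\pm sdim\,L(q)$, where the sign is absorbed by the $\theta$ convention.

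Second, we compute $e_{g,q}=\sum_{q'*Z=q}(-1)^{|Z|}d^{\infty}_{g,q'}$. Because $q$ has no crosses, $q'*Z=q$ forces $Z=\emptyset$ and $q'=q$. Therefore $e_{g,q}=d^{\infty}_{g,q}$.

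The main obstacle is bookkeeping, not mathematics. One must check that the iterated $\varphi$ really agrees with the super-dimension evaluation, including the parity convention of Gruson--Serganova and the sign $(-1)^{\theta}$. One must also check that the term $E(q)$ is not itself killed, i.e. that $q$ has $m(q)\le m-k$ and $n(q)\le n-k$, so that it survives to $\frak{gl}(m-k,n-k)$ and its dimension there equals $sdim\,L(q)$.
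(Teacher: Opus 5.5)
Your first equality is sound. Iterating $\varphi$ $\min(m,n)$ times is a legitimate variant of the paper's argument: it gives $sdim\,E(p)=0$ for every $p$ with a cross. Since every $p$ in the support satisfies $p^{-1}\{<,>\}=g^{-1}\{<,>\}$, the only crossless label that can occur is $q$. Your argument that $e_{g,q}=d^{\infty}_{g,q}$ (because $q'*Z=q$ forces $Z=\emptyset$) is also correct.

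\textbf{The gap} is the remaining step $sdim\,E(q)=sdim\,L(q)$.
\begin{itemize}
\item \emph{What $L(q)$ means.} With $k=|g^{-1}(\times)|$ you have $m(q)=m-k$ and $n(q)=n-k$. So $q$ is a label in $\mathcal P(m-k,n-k)$, and $L(q)$ is the typical irreducible $\mathfrak{gl}(m-k,n-k)$-module. It is not a $\mathfrak{gl}(m,n)$-module when $k>0$.
\item \emph{The detour through Proposition \ref{irreducible}.} It identifies $E(q)$ with the $\mathfrak{gl}(m,n)$-irreducible $L(\tilde g)$, where $\tilde g=q*\{c^0_1,\dots,c^0_k\}$ (the crosses sit at the largest admissible positions, not at $-\infty$). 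That only renames $E(q)$. It also carries the a priori sign $(-1)^{\theta(q)+\theta(\tilde g)}$ from comparing leading Kac terms, and this sign cannot be ``absorbed''. Both lines of the statement carry the same sign $(-1)^{\theta(g)+\theta(q)}$, so you need $sdim\,E(q)=sdim\,L(q)$ exactly.
\item \emph{The postponed check.} What you call bookkeeping is false in general. $\varphi^{\min(m,n)}$ kills $E(q)$ unless $k=\min(m,n)$, i.e.\ unless $g$ is maximally atypical. In the other cases your route never reaches $L(q)$ at all.
\end{itemize}

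\textbf{The missing idea} is to iterate exactly $k=|g^{-1}(\times)|$ times, as the paper does.
\begin{itemize}
\item For $p$ in the support, $m(p)=m-k+|p^{-1}(\times)|$. So if $p$ has a cross, $E(p)$ reaches Kac level at some step $j\le k$. By part 2) of Corollary \ref{stab}, $\psi=\varphi^k$ kills $E(p)$ for every $p\neq q$.
\item For $E(q)$: at step $j$ the algebra is $\mathfrak{gl}(m-j+1,n-j+1)$ and $m(q)=m-k<m-j+1$. Part 1) therefore applies at every step and gives $\psi(E(q))=E(q)$ over $\mathfrak{gl}(m-k,n-k)$.
\item There $m(q)=m-k$ and $n(q)=n-k$, so $\psi(E(q))$ is the Kac module $K(q)$. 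It has no crosses, so it is typical and equals $L(q)$.
\item Since $\varphi$ preserves superdimension, $sdim\,E(q)=sdim\,\psi(E(q))=sdim\,L(q)$ with no extra sign. Combined with your $e_{g,q}=d^{\infty}_{g,q}$, this gives the second equality.
\end{itemize}
When $g$ is not maximally atypical, both sides are $0$, since a typical module over $\mathfrak{gl}(m-k,n-k)$ with $m-k,n-k>0$ has superdimension zero. This is consistent with your $\min(m,n)$ computation.
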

\begin{proof} Let $\varphi : \Lambda^{\pm}_{m,n}\rightarrow \Lambda^{\pm}_{m-1,n-1} $  be a homomorphism  such that $\varphi(x_m)=-\varphi(y_n)$ and $\psi=\underbrace{\varphi\circ\dots\circ\varphi}_{k}$ where $k=|f^{-1}(\times)|$. It is known that $\varphi$ preserves the super-dimension. Therefore 
$$
sdim\, L(g)=sdim\,\psi( L(g))
$$
But it follows from Corollary \ref{stab} that $\psi(E(p))=0$ if $p\ne q$. Since $q^{-1}(\times)=\emptyset$ we have $\psi(E(q))=E(q)=L(q)$. Moreover
from Theorem \ref{Euler}  we see that $e_{q,f} = d^{\infty}_{q,f}$ and proposition follows.
\end{proof}

Let us introduce  a  countable set $ \bar1<\bar2<\dots<\bar k<\dots$  and we suppose that $\bar k<x$ for any $x\in \Bbb Z$. Then the  numbers $d^{\,\infty}_{g,p}$ can be interpreted in the following way.
\begin{lemma}\label{infty}  Let $p^{-1}(>,<)=g^{-1}(>,<)$  then the  number  $d^{\,\infty}_{g,p}$  is equal to the number of maps $\varphi: g^{-1}(\times)\rightarrow p^{-1}(\times)\cup\{\ \bar1,\dots\bar k\}$ such that 

$1)$ $\varphi $ is a bijection

$2)$ $\varphi $ preserves  the orders

$3)$  if  $\varphi(a) \in p^{-1}(\times)$ then $\varphi(a)\le a$.
\end{lemma}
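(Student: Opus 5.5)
We unwind the definition: $d^{\,\infty}_{g,p}=d_{g,p*C}$, where $C\subset\Bbb Z_{\le -d(g)}$ consists of $\circ$-positions of $p$ and satisfies $\max C<\min g^{-1}(\times)$. The Corollary after Proposition \ref{biject} gives $d_{g,f}=|D(g,f)|=|H(g,f)|$. Here $H(g,f)$ is the set of bijections $\varphi: g^{-1}(\times)\to f^{-1}(\times)$ with $\varphi(a)\le a$ and $\varphi(a_1)<\varphi(a_2)$ whenever $a_1\dashv a_2$. So the task is to count $H(g,p*C)$ for such a far-left $C$, and to match it with the maps described in the Lemma.

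\textbf{Step 1: the far-left crosses.} Put $k=|C|$ and write $C=\{c_1<\dots<c_k\}$. The sizes must agree, $|g^{-1}(\times)|=|p^{-1}(\times)|+k$, which fixes $k$. Every $c_j$ lies to the left of all of $g^{-1}(\times)$ and of $p^{-1}(\times)$. We will also choose $C$ far enough left that the gaps between its elements do not matter. The constraint $\varphi(a)\le c_j$ is then automatic, so the $c_j$ behave exactly like formal symbols $\bar 1<\dots<\bar k$ placed below every integer. Define the relabelling $c_j\mapsto\bar j$. It turns any $\varphi\in H(g,p*C)$ into a bijection onto $p^{-1}(\times)\cup\{\bar1,\dots,\bar k\}$, and condition $3)$ of the Lemma is just the surviving inequality $\varphi(a)\le a$ on the genuine crosses of $p$. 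Since the relabelling preserves order, the condition ``$a_1\dashv a_2\Rightarrow\varphi(a_1)<\varphi(a_2)$'' is also preserved.

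\textbf{Step 2: from cap-order to full order.} The Lemma asks for $\varphi$ to be order preserving for the natural order, whereas $H$ only asks for monotonicity along $\dashv$. This is the step I expect to be the main obstacle. I would first try a direct counting comparison. Write $f=p*C$ and use the Su–Zhang type argument behind Lemma \ref{prod}: the count $|H(g,f)|$ equals the number of admissible numerations $\xi$ of $g$ with $f\in Y(\xi)$. Both descriptions count the same set once one observes that the standard numeration $\xi_{st}$ is admissible. With the far-left crosses there is essentially no freedom to permute the remaining targets, because every target is either $\le$ every source (the $\bar j$) or constrained by $\varphi(a)\le a$. Concretely, I would show that an $\varphi\in H$ which is not monotone can be uncrossed: if $a<a'$ are $\dashv$-incomparable with $\varphi(a)>\varphi(a')$, swapping the two values keeps all conditions. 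So each $\dashv$-compatible choice corresponds to exactly one monotone one.

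\textbf{Fallback.} If that uncrossing bijection fails, I would instead interpret the statement literally, as the count of $D(g,p*C)$ in Proposition \ref{biject} with order condition $2)$ applied to the $x_i$, and prove the equivalence via the chain of bijections $X(g,f)\leftrightarrow Z(g,f)\leftrightarrow D(g,f)$. The far-left $\bar j$'s then absorb the first $k$ crosses of $g$ in order. Finally, I would verify independence of $C$ directly from this description, which also justifies the definition of $d^{\,\infty}$.
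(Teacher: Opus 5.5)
Your Step 1 is already the whole proof, and it is exactly what the paper means by ``follows from the definition''. You take $C$ far to the left of both $g^{-1}(\times)$ and $p^{-1}(\times)$, which is allowed because $d_{g,p*C}$ does not depend on such $C$. You then use $d_{g,p*C}=|H(g,p*C)|$ and relabel $c_j\mapsto\bar j$. The inequality $\varphi(a)\le a$ becomes vacuous on $C$, and the condition ``$a_1\dashv a_2\Rightarrow\varphi(a_1)<\varphi(a_2)$'' carries over verbatim. (Minor slip: the automatic inequality is $c_j\le a$ when $\varphi(a)=c_j$, not $\varphi(a)\le c_j$.)

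\textbf{The gap is Step 2 and the fallback, which rest on a misreading of condition $2)$.} ``Preserves the orders'' means $a_1\dashv a_2$ implies $\varphi(a_1)<\varphi(a_2)$, i.e.\ the same condition as in $H(g,f)$. It does not mean monotonicity for the natural order on $g^{-1}(\times)$. Under your reading the lemma is false, and your uncrossing map is many-to-one, not a bijection.

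Take $g^{-1}(\times)=\{1,2,4\}$ with no $<,>$, $p=g_{core}$, $k=3$.
\begin{itemize}
\item The caps are $(1,6),(2,3),(4,5)$, so $1\dashv 2$, $1\dashv 4$, and $2,4$ are incomparable.
\item The $\dashv$-monotone bijections onto $\{\bar1,\bar2,\bar3\}$ send $1\mapsto\bar1$ and $\{2,4\}\mapsto\{\bar2,\bar3\}$ in either order, so there are two of them.
\item This matches $d^{\,\infty}_{g,g_{core}}=3!/(3\cdot1\cdot1)=2$ from the hook-length corollary, and the coefficient $2\,ch\,E(\emptyset)$ in the paper's example.
\item Only one of these maps is monotone for the natural order, and uncrossing sends both to it.
\end{itemize}

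Swapping the values of $\dashv$-incomparable $a<a'$ can also destroy $\dashv$-monotonicity. Take $g^{-1}(\times)=\{1,2,5\}$, with caps $(1,4),(2,3),(5,6)$, and $\varphi(1)=\bar2$, $\varphi(2)=\bar3$, $\varphi(5)=\bar1$. Swapping the values at $2$ and $5$ violates $1\dashv 2$. Likewise, the fallback's claim that the $\bar j$ absorb the first $k$ crosses of $g$ ``in order'' fails in the first example.

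So delete Step 2 and the fallback. With condition $2)$ read as $\dashv$-monotonicity, Step 1 finishes the proof.
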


\begin{proof} A proof  easily follows from the definition of $d^{\,\infty}_{g,p}$.
\end{proof}
\begin{corollary}  (see \cite {HW})The following equality holds true 
$$
d^{\infty}_{g,g_{core}}=\frac{|g^{-1}(\times)|!}{\prod_{a\in g^{-1}(\times)}|T_a |},\,\, T_a=\{b\in g^{-1}(\times)\mid a\dashv b \}
$$

\end{corollary}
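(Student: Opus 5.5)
By Lemma \ref{infty} applied with $p=g_{core}$, the set $p^{-1}(\times)$ is empty. So $d^{\infty}_{g,g_{core}}$ is the number of order-preserving bijections
$$
\varphi: g^{-1}(\times)\rightarrow\{\bar1,\dots,\bar k\},\qquad k=|g^{-1}(\times)|.
$$
Condition $3)$ is vacuous. The order in condition $2)$ is the partial order $\dashv$ on $g^{-1}(\times)$, matching condition $3)$ of the Corollary describing $H(g,f)$, of which Lemma \ref{infty} is the limit $C\to-\infty$. Such bijections are exactly the linear extensions of the poset $(g^{-1}(\times),\dashv)$. Equivalently, they are the admissible numerations of $g$: a numeration with $i<j$ whenever $\xi(i)\dashv\xi(j)$.

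The plan is therefore to count the linear extensions of this poset and show the count equals the stated hook-type product. The key structural fact is that the poset is a rooted forest. Two caps of a Brundan--Stroppel cap diagram are either nested or disjoint, so $C_a\supset C_{a'}$ defines a forest order, and the elements above any given one form a chain. For $a\in g^{-1}(\times)$, the set $T_a=\{b\mid a\dashv b\}$ is then the subtree rooted at $a$, consisting of $a$ together with all caps nested inside $C_a$.

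I then prove the classical hook length formula for forests (Knuth): the number of linear extensions of a forest $F$ on $k$ vertices is $k!/\prod_{a}|T_a|$. The argument is by induction on $k$.

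\emph{Several components.} If the forest has components $F_1,\dots,F_s$ of sizes $k_1,\dots,k_s$, a linear extension is a shuffle of linear extensions of the components. This gives
$$
\binom{k}{k_1,\dots,k_s}\prod_i e(F_i),
$$
and the multinomial factor combines with the inductive formula for each $F_i$ to produce $k!/\prod|T_a|$.

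\emph{One tree.} If $F$ is a single tree, its root $a_0$ (the outermost cap) must receive the smallest label $\bar1$. Removing the root leaves a forest on $k-1$ vertices with the same subtrees $T_a$ for $a\neq a_0$. Hence
$$
e(F)=\frac{(k-1)!}{\prod_{a\ne a_0}|T_a|}=\frac{k!}{\prod_a|T_a|},
$$
since $|T_{a_0}|=k$. This mirrors the connected-component reduction used in the proof of Lemma \ref{prod}.

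The only point requiring care is the identification in the first step. I must check that the order in Lemma \ref{infty} is $\dashv$ and that the limit defining $d^{\infty}$ loses no constraint when $p^{-1}(\times)=\emptyset$. Once all crosses are sent below every element of $g^{-1}(\times)$, the constraint $\varphi(a)\le a$ disappears, and only the $\dashv$-monotonicity condition survives. After that, the hook formula computation is routine.
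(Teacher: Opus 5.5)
Your proposal is correct and follows the paper's route. The paper also specializes Lemma \ref{infty} to $p=g_{core}$, so that $d^{\infty}_{g,g_{core}}$ counts the $\dashv$-monotone bijections $g^{-1}(\times)\to\{\bar1,\dots,\bar k\}$, and then concludes ``by induction on $|g^{-1}(\times)|$''. Your forest hook-length induction (shuffling the components, with the root of each tree forced to take $\bar1$) is that induction written out in full.
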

\begin{proof} It easily follows from the Lemma above and induction on $|g^{-1}(\times)|$.
\end{proof}
\begin{thm}  Let $\varphi : \Lambda^{\pm}_{m,n}\rightarrow \Lambda^{\pm}_{m-1,n-1} $ be a homomorphism  such that $\varphi(x_m)=-\varphi(y_n)$.
Then the following equality holds true
$$
\varphi(L(g))=\sum_{i=1}^r(-1)^{\theta(f)+\theta( g_i)}L(g_i)
$$
where $g_1,\dots g_r$ are the diagrams corresponding to  the connected components of $f^{-1}(\times)$  and  they  are obtained from $f$ by deleting the maximal  cap from the corresponding connected component.
\end{thm}
\begin{proof} Let $f_i$ be the same as in the formulation of the theorem.   Let us prove first that  if $|q^{-1}(\times)|<|f^{-1}(\times)|$ then
$$
d^{\,\infty}_{f,q}=\sum_{i=1}^rd^{\,\infty}_{f_i,q}
$$
Since $|q^{-1}(\times)|<|f^{-1}(\times)|$ the number $k>0$ and $\bar k<a$ for any $a\in\Bbb Z$. Therefore if $\varphi(a)=\bar k$ then $a$ is a minimal one  and other elements from the corresponding connected component are mapped  to $p^{-1}(\times)\cup\{\bar1,\dots,\overline{q-1}\}$ and equality  follows. Now we have  
$$
e_{f,p}=\sum_{q*Z=p}(-1)^{|Z|}d^{\infty}_{f,q}.
$$
If  $|p^{-1}(\times)|<|f^{-1}(\times)|$ then $|q^{-1}(\times)|<|f^{-1}(\times)|$  and we have 
$$
e_{f,p}=\sum_{q*Z=p}(-1)^{|Z|}\sum_{i=1}^kd^{\,\infty}_{f_i,q}=\sum_{i=1}^k\sum_{q*Z=p}(-1)^{|Z|}d^{\,\infty}_{f_i,q}=\sum_{i=1}^ke_{f_i,p}
$$
If  $|p^{-1}(\times)|=|f^{-1}(\times)|$ then $e_{p,f}=d_{p,f}$ and $E(p)$ is a Kac module. Therefore $\varphi(E(p))=0$. So we see that
$$
\varphi(L(f))=\sum_{p}\sum_{i=1}^k(-1)^{\theta(f)+\theta(p)}e_{f_i,p}E(p)=\sum_{i=1}^k(-1)^{\theta(f)+\theta(f_i)}L(f_i).
$$
Therefore the theorem is proved.
\end{proof}
 It is not easy to calculate the  coefficients   $e_{f,p}$ in Theorem \ref{Euler}. But in some special  cases the corresponding sum can be reduced to one term. 
\begin{proposition}\label{euler1} Let $g$ be a weight diagram such that $g^{-1}(\times)\subset \Bbb Z_{>n-m}.$ Let us denote by $f$ the  highest weight diagram as in Proposition \ref{irreducible}.  Then the following equality holds true
$$
L(f)=\sum_{p^{-1}(\times)\subset \Bbb Z_{>n-m}}(-1)^{\theta(f)+\theta(p)}d^{\,\infty}_{f,p}E(p)
$$
\end{proposition}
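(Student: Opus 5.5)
We start from Theorem \ref{Euler}, which gives
$$
e_{f,p}=\sum_{q*Z=p}(-1)^{|Z|}d^{\,\infty}_{f,q},\qquad Z\subset \Bbb Z_{\le -d(f)}=\Bbb Z_{\le n-m}.
$$
The task is to show two things under the hypothesis $f^{-1}(\times)\subset\Bbb Z_{>n-m}$. First, for every $p$ with $p^{-1}(\times)\subset \Bbb Z_{>n-m}$ this alternating sum collapses to the single term $Z=\emptyset$, giving $e_{f,p}=d^{\,\infty}_{f,p}$. Second, $e_{f,p}=0$ whenever $p^{-1}(\times)$ meets $\Bbb Z_{\le n-m}$.

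The first claim is immediate. If $p^{-1}(\times)\subset\Bbb Z_{>n-m}$, then $p=q*Z$ with $Z\subset\Bbb Z_{\le n-m}$ forces $Z\subset p^{-1}(\times)\cap \Bbb Z_{\le n-m}=\emptyset$. Hence $q=p$ and $e_{f,p}=d^{\,\infty}_{f,p}$.

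For the second claim, write $W=p^{-1}(\times)\cap\Bbb Z_{\le n-m}\neq\emptyset$. Then the diagrams $q$ with $q*Z=p$ are exactly $q=p\setminus Z$ (crosses in $Z$ replaced by $\circ$) for $Z\subseteq W$, and
$$
e_{f,p}=\sum_{Z\subseteq W}(-1)^{|Z|}d^{\,\infty}_{f,p\setminus Z}.
$$
To show this vanishes, we use Lemma \ref{infty}. There $d^{\,\infty}_{f,q}$ counts order-preserving bijections $\varphi$ from $f^{-1}(\times)$ onto $q^{-1}(\times)\cup\{\bar1,\dots,\bar k\}$ satisfying $\varphi(a)\le a$ whenever $\varphi(a)\in q^{-1}(\times)$. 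Here "order-preserving" means with respect to $\dashv$, as in the corollary of Section 4. The key observation is that every $w\in W$ satisfies $w\le n-m<a$ for every $a\in f^{-1}(\times)$. So the constraint $\varphi(a)\le a$ is automatic for targets in $W$, exactly as it is for the symbols $\bar j$.

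We then give an explicit bijection. Any map counted by $d^{\,\infty}_{f,p\setminus Z}$ uses targets $(p^{-1}(\times)\setminus W)\cup (W\setminus Z)\cup\{\bar 1,\dots,\bar k\}$ with $k=|Z|+$const. Relabel the targets in $W\setminus Z$ together with the bars into a single totally ordered block placed below all of $p^{-1}(\times)\setminus W$. Since both $W$ and the bars lie below every element of $f^{-1}(\times)$ and are totally ordered, only the block's size $N=|W|+$const matters, and it is independent of $Z$. The relevant count $c$ is thus the same for every $Z\subseteq W$, and the sum becomes $c\sum_{Z\subseteq W}(-1)^{|Z|}=0$.

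The main obstacle is this relabelling. One must check that the ordering convention of the bars relative to elements of $W$ (bars are below all integers) does not affect the count. Concretely, we must confirm that the order-preservation condition in Lemma \ref{infty} only compares images within this block via the total order of the block. If this holds, the count depends only on $N$; if not, we would instead pair $Z$ with $Z\triangle\{w_0\}$ for $w_0=\min W$ and build a sign-reversing involution.

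Finally, we substitute these values of $e_{f,p}$ into the decomposition of Theorem \ref{Euler}, which yields the stated formula.
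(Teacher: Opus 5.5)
There is a genuine gap, caused by a misreading of the hypothesis. In the proposition, the condition $g^{-1}(\times)\subset\mathbb{Z}_{>n-m}$ is imposed on $g$. The diagram $f$ is the highest weight diagram of $E(g)$ in the sense of Proposition \ref{irreducible}, so $f^{-1}(\times)=g^{-1}(\times)\cup\{c_1,\dots,c_l\}$, where $c_1>\dots>c_l$ are the $l$ largest positions of $\mathbb{Z}_{\le n-m}$ not occupied by $<$ or $>$. Thus $f$ in general has crosses at or below $n-m$. Your key observation, that $w\le n-m<a$ for all $w\in W$ and $a\in f^{-1}(\times)$, fails for $a=c_i$. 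An element of $W$ can be $c_j$ with $j<i$, hence larger than $c_i$, and then condition $3)$ of Lemma \ref{infty} genuinely restricts $\varphi(c_i)$. So the block relabelling does not, as it stands, show that $d^{\,\infty}_{f,p\setminus Z}$ is independent of $Z$. What you have written proves only the case $l=0$, i.e.\ $f=g$, which is how the introduction phrases the result. The position of the bars relative to $W$, which you single out as the obstacle, is harmless. The real difficulty is the crosses $c_i$, and that is exactly what the paper's case analysis handles ($a=c_i$, $c=c_j$, showing $j>i$).

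The missing step is short.

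1. Only $<$ and $>$ lie between consecutive $c_i$, so their caps are nested: $c_l\dashv c_{l-1}\dashv\dots\dashv c_1$.
2. Since $W$ consists of positions in $\mathbb{Z}_{\le n-m}$ not occupied by $<,>$, any $w\in W$ with $w>c_i$ equals some $c_j$ with $j<i$.
3. Put $U=p^{-1}(\times)\cap\mathbb{Z}_{>n-m}$. Suppose $\varphi$ satisfies $1)$, $2)$, and $3)$ for targets in $U$, but $\varphi(c_i)=c_j\in W\setminus Z$ with $j<i$.
4. By order preservation, $\varphi(c_j)>\varphi(c_i)=c_j$, so $\varphi(c_j)$ is an integer.
5. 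It cannot lie in $U$, since $3)$ there would force $\varphi(c_j)\le c_j$. Hence it lies in $W$ and exceeds $c_j$, so $\varphi(c_j)=c_{j'}$ with $j'<j$.
6. Repeating this, the index strictly decreases, which gives a contradiction.

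Hence condition $3)$ on targets in $W$ follows from the other conditions. Only after this does your relabelling give a $Z$-independent count, and hence $e_{f,p}=0$. With this lemma added, your route differs usefully from the paper's. The paper, like your fallback, pairs $Z\ni c$ with $Z\setminus\{c\}$ for $c=\min W$ and matches $d^{\,\infty}_{f,q}$ with $d^{\,\infty}_{f,q*\{c\}}$ by sending a bar to $c$. Your version shows directly that all $2^{|W|}$ terms coincide.
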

\begin{proof} We need to prove that $e_{f,p}=d^{\,\infty}_{f,p}$. We know that 
$$
e_{f,p}=\sum_{q*Z=p}(-1)^{|Z|}d^{\,\infty}_{f,q}
$$
where  sum is taken over all $Z\subset p^{-1}(\times)\cap\Bbb Z_{\le n-m}$.

First let us prove that $p^{-1}(\times)\subset \Bbb Z_{>n-m}$. Suppose that this is not true and $c$ be the minimal (in ordinary sense) element in that intersection. Then we have 
$$
e_{,pf}=\sum_{q*Z=p,\,c\in Z}(-1)^{|Z|}(d^{\,\infty}_{q,f}-d^{\,\infty}_{q*\{c\},f})
$$
Therefore we see that we only need to prove that  $d^{\,\infty}_{f,q}=d^{\,\infty}_{f,q*\{c\}}$. Consider two sets
$$
X=q^{-1}(\times)\cup\{\bar1,\dots\bar k\},\quad Y=q^{-1}(\times)\cup\{c,\bar2,\dots\bar k\}
$$
Then it is easy to verify that function $\tau$  sending $\bar1$ to $c$ and leaving  all other elements unchanged is a monotonic bijection. 
Now let $\varphi: f^{-1}(\times)\rightarrow X$ be a bijection  with the properties as in Lemma \ref{infty}. Let us check that $\tau\circ\varphi$ has the same properties. Clearly $\tau\circ\varphi$  has property $2)$ since $\tau$ is monotonic. Let us prove that $\tau\circ\varphi(a)\le a$ for any $a\in f^{-1}(\times)$. If $\varphi(a)\ne \bar1$ then we have $\tau\circ\varphi(a)=\varphi(a)\le a$. Let $\varphi(a)=\bar1$ then  $\tau\circ\varphi(a)=c$ and we need to prove that $c\le a$. If $a\ge n-m$ then $c\le n-m\le a$. Let us suppose that $a<n-m$. We know that 
$f^{-1}(\times)=g^{-1}(\times)\cup\{c_1,\dots,c_l\}$ where $c_1>c_2>\dots>c_l$ all   belong to $\Bbb Z_{\le n-m}$. Since $a<n-m$ we see that $a=c_i$ for some $i$. Moreover since $q^{-1}(>,<)=f^{-1}(<,>)=p^{-1}(>,<)$ we have $c=c_j$ for some $j$ or $c<c_i,\,i=1,\dots,l$. In the second case we see that $c<a$. If $a=c_1$ then $c=c_j\le c_1=a$, Therefore we can suppose that $a=c_i,\, i\ge2$ and $c=c_j$. Let us show that $j>i$. Indeed if $j<i$ then $c_i\dashv c_j$ and $\varphi(c_j)>\varphi(c_i)=\varphi(a)=\bar1$. Therefore $\varphi(c_j)\in q^{-1}(\times)\subset p^{-1}(\times)$. Therefore $\varphi(c_j)>c_j$ since the minimality of $c$ and $c\notin q^{-1}(\times)$. So we got a contradiction. 
Therefore $\tau\circ\varphi$ satisfies all properties from the Lemma \ref{infty}.
\end{proof}
\begin{example} Let $f^{-1}(\times)=\{1,2,4\},f^{-1}(<)=f^{-1}(>)=\emptyset.$ Then we have
$$
ch\,L(1,2,4)=2ch\,E(\emptyset)-2ch\,E(1)+2ch\,E(2)-ch\,E(3)+ch\,E(4)+
$$
$$
2ch\,E(1,2)-ch\,E(1,3)+
$$
$$
+ch\,E(1,4)+ch\,E(2,3)-ch\,E(2,4)-ch\,E(1,2,3)+ch\,E(1,2,4)
$$
Therefore $sdim\, L(1,2,4)=2$. 
\end{example}

 Let $g$ be such that $g^{-1}(\times)\subset\Bbb Z_{>n-m}$. Then we can give a geometric proof of the decomposition of irreducible module in terms of Euler characters. Let us define the polyhedron   $\Delta_g$  by the following conditions:

$1)$  $x_i=c_i$ if $g(c_i)=<,>$; 

$2$ $n-m\le x_i\le c_i$ if $g(c_i)=\times$;

$3)$ $x_i\le x_j$ if $g(c_i)=g(c_j)=\times$  and $c_i\dashv c_j.$ 

For  $x\in \Delta_{g}$  let us set $N_x=\{c_i\in g^{-1}(\times)\mid x_i=n-m\}$. The set $N_x$  is ordered   by means $\dashv$.  as a subset of $g^{-1}(\times)$. Let   $c_{g,x} $  be  the number of  monotonic bijections from $N_x$ to $\{\bar1,\dots,\bar s\}$ where $s=|N_x|$.

\begin{proposition}  

 The  following equality holds true
$$
ch\,L(g)=\sum_{x\in \Delta_g}c_{g,x}ch\,E(x)
$$
\end{proposition}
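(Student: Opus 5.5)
We reduce the statement to Proposition \ref{euler1} together with Lemma \ref{infty}. Since $g^{-1}(\times)\subset\Bbb Z_{>n-m}$, Proposition \ref{euler1} gives
$$
ch\,L(g)=\sum_{p}(-1)^{\theta(g)+\theta(p)}d^{\,\infty}_{g,p}\,ch\,E(p),
$$
where $p$ runs over diagrams with $p^{-1}\{<,>\}=g^{-1}\{<,>\}$ and $p^{-1}(\times)\subset\Bbb Z_{>n-m}$. The plan is to regroup the right hand side according to points of $\Delta_g$. This amounts to showing that, for each such $p$, the number $d^{\,\infty}_{g,p}$ (together with the sign) equals the sum of $c_{g,x}$ over those $x\in\Delta_g$ with $E(x)=E(p)$.

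\emph{Step 1: labelling Euler characters by points of $\Delta_g$.} Write $g^{-1}(\times)=\{c_{i_1}<\dots<c_{i_r}\}$. To $x\in\Delta_g$ attach the diagram $p_x$ whose $<,>$ entries are those of $g$ and whose crosses are $\{x_i\mid g(c_i)=\times,\ x_i>n-m\}$; we set $E(x):=E(p_x)$. Coordinates equal to $n-m$ are treated as ``crosses sent to infinity''; they play the role of the auxiliary symbols $\bar1,\dots,\bar k$ in Lemma \ref{infty}. For this to make sense we must check that distinct coordinates $x_i>n-m$ are distinct integers.

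\emph{Step 2: matching with Lemma \ref{infty}.} Fix $p$. By Lemma \ref{infty}, $d^{\,\infty}_{g,p}$ counts bijections $\varphi:g^{-1}(\times)\to p^{-1}(\times)\cup\{\bar1,\dots,\bar k\}$ that are monotone with respect to $\dashv$ and satisfy $\varphi(a)\le a$ when $\varphi(a)\in p^{-1}(\times)$. Given such $\varphi$, define $x_i=\varphi(c_i)$ if $\varphi(c_i)\in\Bbb Z$ and $x_i=n-m$ otherwise. We then verify conditions $1)$--$3)$ of $\Delta_g$:
\begin{itemize}
\item the bound $x_i\le c_i$ is property $3)$ of $\varphi$;
\item $x_i\ge n-m$ holds because $p^{-1}(\times)\subset\Bbb Z_{>n-m}$;
\item monotonicity in $\dashv$ holds because $\bar k<\Bbb Z$.
\end{itemize}
The fibre of $\varphi\mapsto x$ over a fixed $x$ is exactly the set of monotone bijections $N_x\to\{\bar1,\dots,\bar s\}$, which has cardinality $c_{g,x}$. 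Conversely, any $x\in\Delta_g$ with $p_x=p$ together with such a bijection reconstructs $\varphi$. Summing over $x$ with $p_x=p$ gives $\sum_{x:\,p_x=p}c_{g,x}=d^{\,\infty}_{g,p}$, and the formula follows with the sign $(-1)^{\theta(g)+\theta(p)}$ absorbed into $ch\,E(x)$; alternatively, the sign is part of the convention implicit in the statement.

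\emph{Main obstacle.} The delicate point is injectivity in Step 1. We need the coordinates $x_i>n-m$ to be pairwise distinct and to avoid the $<,>$ positions, so that $p_x$ is a genuine diagram with $|\{i:x_i>n-m\}|$ crosses. Equivalently, $\Delta_g$ should be read as a set of lattice points subject to these distinctness constraints, which is what the bijectivity of $\varphi$ encodes. We will make this explicit, and also check that points on the boundary face $x_i=n-m$ are handled correctly: the value $n-m$ itself is never a cross of $p$, and this is exactly where the bound $c\le n-m$ from the proof of Proposition \ref{euler1} is used.
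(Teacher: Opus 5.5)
Your argument is correct, but it runs in the opposite direction from the paper's proof.

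\emph{Your route.} You take Proposition~\ref{euler1} as input and regroup its finite sum. The map $\varphi\mapsto x$, which sends the symbols $\bar1,\dots,\bar k$ to the face $x_i=n-m$, has fibre over $x$ equal to the set of linear extensions of $(N_x,\dashv)$. Hence $d^{\,\infty}_{g,p}=\sum_{p_x=p}c_{g,x}$. In the converse direction, the one point you should write out is that no $c_j\notin N_x$ can satisfy $c_j\dashv c_i$ with $c_i\in N_x$; condition $3)$ of $\Delta_g$ excludes this.

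\emph{The paper's route.} The paper does not use Theorem~\ref{Euler} or Proposition~\ref{euler1} here at all. It starts from the Kac-level polyhedral formula $ch\,L(g)=\sum_{y\in D_g}ch\,K(y)$ of \cite{Ser}. It then collapses $D_g$ onto $\Delta_g$ by the truncation map $F$, which sends every cross coordinate $\le n-m$ to the boundary value. Finally it identifies each fibre sum $\sum_{y\in F^{-1}(x)}ch\,K(y)$ with $c_{g,x}\,ch\,E(x)$, which amounts to Corollary~\ref{EulerKac}: each $|N_x|$-element set $C\subset p_x^{-1}(\circ)\cap\Bbb Z_{\le n-m}$ of ``removed'' positions is filled by exactly $c_{g,x}$ monotone bijections $N_x\to C$.

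\emph{What each buys.} The counting core is the same in both. The paper's route is an independent, geometric proof of Proposition~\ref{euler1}. In the paper, the vanishing of Euler terms with crosses $\le n-m$ costs the M\"obius inversion of Theorem~\ref{Euler} plus the $\tau$-argument. In the geometric proof it is automatic, because truncation pushes everything onto the face $x_i=n-m$. The price is working with infinite Kac expansions in a completion, and an implicit alternating convention for $K(y)$ at degenerate lattice points. Your route is a short finite regrouping, but it is only a reformulation of Proposition~\ref{euler1} and inherits its whole proof.

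\emph{Signs and degenerate points.} Here you are more explicit than the paper, which writes both sums without signs. What you actually prove is
\[
ch\,L(g)=\sum_x(-1)^{\theta(g)+\theta(p_x)}c_{g,x}\,ch\,E(p_x),
\]
summed over the lattice points of $\Delta_g$ whose coordinates $>n-m$ are distinct and avoid $g^{-1}\{<,>\}$. If you prefer to sum over all lattice points, define $E(x)$ by the alternating expression for $ch\,E(\chi_{r,s})\Delta(x)\Delta(y)$ from the preliminaries. That expression vanishes whenever $A$ or $B$ acquires a repeated entry, so the degenerate points drop out.

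\emph{A superfluous remark.} Your claim that the proof of Proposition~\ref{euler1} is needed at the boundary face is unnecessary. All you use is the clause $p^{-1}(\times)\subset\Bbb Z_{>n-m}$ in its statement, together with $g^{-1}(\times)\subset\Bbb Z_{>n-m}$, which guarantees $n-m\le c_i$.
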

\begin{proof} Let $D_g$ be the polyhedron defined by the conditions  (see \cite{Ser}):

$1)$  $x_i=c_i$ if $g(c_i)=<,>$; 

$2$ $x_i\le c_i$ if $g(c_i)=\times$;

$3)$ $x_i\le x_j$ if $g(c_i)=g(c_j)=\times$  and $c_i\dashv c_j.$

Let us define the function $\chi  :\Bbb R\rightarrow \Bbb R$
$$
\chi(\alpha)=\begin{cases} 0,\,\, \alpha\le n-m\\
\alpha,\,\, \alpha >n-m
\end{cases},\quad \chi(\alpha)=\frac12(sgn(\alpha-n-m)\alpha+\alpha)
$$
Then we can define the map $F:\Bbb R^N\rightarrow \Bbb R^{N}$ such that: if $x=(x_1,\dots,x_N)\in \Bbb R^N$ then
$ F(x)=(F_1(x),\dots,F_N(x)) $ where
$$
F_i(x)=\begin{cases}x_i,\,\text{if}\, g(c_i)=<,>\\
\chi(x_i),\, \text{if}\, g(c_i)=\times
\end{cases}
$$
 It is not difficult to verify that $F(D_g)=\Delta_g$.
 Therefore  by \cite{Ser}
 $$
 L(g)=\sum_{y\in D_g}K(g)=\sum_{x\in\Delta_g}\sum_{y\in F^{-1}(x)}K(y).
 $$
 But 
 $$
 \sum_{y\in F^{-1}(x)}K(y)=c_{g,x}E(x)
 $$
 It is easy to verify  that $y\in F^{-1}(x)$  if and only if $y_i=x_i$ if $x_i>n-m$ and  for all other $y_j$  we have $y_j\le n-m$.  Let us set $C_x=\{c_i\in g^{-1}(\times)\mid x_i\le n-m\}$. The set $C_x$  is ordered   by means $\dashv$.  as a subset of $g^{-1}(\times)$ and $c_{g,x} $ is equal the number of  monotonic bijections from $C_x$ to $\{\bar1,\dots,\bar s\}$ where $s=|C_x|$.

\end{proof}


\begin{thebibliography}{99}
\bibitem{B}
Brundan, J.: Kazhdan-Lusztig polynomials and character formulae for Lie superalgebra $\frak{gl}(m|n).$
J. Amer. Math. Soc. 16(1), (2003)
\bibitem{BS} 
Brundan, J., Stroppel, C.: Highest weight categories arising from Khovanov's diagram algebra IV:
the general linear supergroup. J. Eur. Math. Soc. 14, (2012)
\bibitem{D}
P. Deligne, Categories tensorielles, Mosc. Math. J. 2 (2) (2002) 227--248.
\bibitem{GS}
Gruson, C., Serganova, V.: Cohomology of generalized supergrassmannians and character formulae for basic classical Lie superalgebras. Proc. Lond. Math. Soc. 101(3), (2010)
\bibitem{GH}
M. Gorelik, Th. Heidersdors:
Gruson- Serganova character  formulas and the Duflo-Serganova  cohomology functor J. reine angew. Math. 798 (2023), 1--54 Journal fur die reine und angewandte Mathematik DOI 10.1515/crelle-2022-0080
\bibitem{HW}
Heidersdorf,T. Weissauer,R.: Cohomological Tensor Functors on Representations of the General Linear Supergroup. Memoirs of the American Mathematical Society. March 2021, Volume 270,  Number 1320 (fourth of 7 numbers)
\bibitem{MS}
Musson, I., Serganova, V.: Combinatorics and characters formulae for Lie superalgebra gl(m, n). Transformation Groups 16, (2011)
\bibitem{Serga}
V. Serganova,:Characters of irreducible representations of simple Lie superalgebras, in Proceedings of the Int. Congress of Math., vol. II, Berlin 1998, Doc. Math. Extra Vol.II (1998) 583--593.
\bibitem{SV}
Sergeev, A.N., Veselov, A.P.: Grothendieck rings of basic classical Lie superalgebras. Ann. of Math. 173, (2011).
\bibitem{SV2}
 A.N. Sergeev., A.P. Veselov. : Euler characters and super Jacobi polynomials.  Adv. in Math. 228, (2011), 4286--4315.
\bibitem{Ser}
A.N. Sergeev.: On rings of supersymmetric polynomials. Journal of Algebra 517 (2019) 336--364.

\bibitem{Ser3}
A.N. Sergeev.: Canonical bilinear form and Euler characters Journal of Algebra 601 (2022) 149--177.

\bibitem{Ser2}
Sergeev, A.: Combinatorics of Irreducible Characters for Lie Superalgebra $\frak{gl}(m,n).$
 Transformation Groups 31, 2021--2043 (2026). https://doi.org/10.1007/s00031-026-09961-3
 \bibitem{SZ}
  Su, Y., Zhang, R.B.: Character and dimension formulae for general linear superalgebra. Adv. Math.
211, (2007)
\end{thebibliography}
\end{document}